\newtheorem{lemma}{Lemma}[section]
\newtheorem{thm}[lemma]{Theorem}
\newtheorem{rem}[lemma]{Remark}
\newtheorem{prop}[lemma]{Proposition}
\newcommand\matC{{\mathbb{C}}}
\renewcommand{\hbar}{{\overline{h}}}
\newfont{\Got}{eufm10 scaled 1200}
\newcommand{\mycap} [1] {\caption{\footnotesize{#1}}}
\newcommand\calF{{\mathcal F}}
\newcommand\calS{{\mathcal S}}
\begin{document}

\title{Generic flows on $3$-manifolds}

\author{Carlo~\textsc{Petronio}}

\maketitle

\begin{abstract}
\noindent
\noindent MSC (2010): 57R25 (primary); 57M20, 57N10, 57R15 (secondary).
We provide a combinatorial presentation of the
set $\calF$ of 3-dimensional \emph{generic flows}, namely the set
of pairs $(M,v)$ with $M$ a compact oriented $3$-manifold
and $v$ a nowhere-zero vector field on $M$ having generic behaviour along $\partial M$,
with $M$ viewed up to diffeomorphism and $v$ up to homotopy on $M$ fixed on $\partial M$.
To do so we introduce a certain class $\calS$ of finite 2-dimensional polyhedra with
extra combinatorial structures, and some moves on $\calS$, exhibiting
a surjection $\varphi:\calS\to\calF$ such that $\varphi(P_0)=\varphi(P_1)$ if
and only if $P_0$ and $P_1$ are related by the moves. To obtain this
result we first consider the subset $\calF_0$ of $\calF$ consisting of flows
having all orbits homeomorphic to closed segments or points,
constructing a combinatorial counterpart $\calS_0$ for $\calF_0$ and
then adapting it to $\calF$.
\end{abstract}

\noindent
Combinatorial presentations of $3$-dimensional topological categories, such
as the description of closed oriented $3$-manifolds via surgery along framed links in $S^3$,
and many more, have proved crucial for the theory of quantum invariants, initiated
in~\cite{RT} and~\cite{TV} and now one of the main themes of geometric topology.
In this paper we provide one such presentation for the set $\calF$ of pairs $(M,v)$ with
$M$ a $3$-manifold and $v$ a flow
having generic behaviour on $\partial M$, viewed up to homotopy fixed on $\partial M$. This extends the presentation
of closed combed $3$-manifolds contained in~\cite{LNM}, and it is based on a generalization
of the notion of \emph{branched spine}, introduced there as a combination
of the definition of special spine due to Matveev~\cite{Matveev:AAM}
with the concept of branched surface introduced by Williams~\cite{Williams}, already
partially investigated by Ishii~\cite{Ishii} and Christy~\cite{Christy}.
A \emph{presentation} here is as usual meant as a constructive surjection onto $\calF$ from a set
of finite combinatorial objects, together with a finite set of combinatorial
moves on the objects generating the equivalence relation induced by the surjection.

To get our presentation we will initially restrict to generic flows having all orbits
homeomorphic to points or to segments, viewed first up to diffeomorphism and then up to homotopy,
and we will carefully describe their combinatorial counterparts.

A restricted type of generic flows on manifolds with boundary
was actually already considered in~\cite{LNM}, but two such flows could never
be glued together along boundary components. On the contrary, as we will point out in detail in
Remark~\ref{glue:rem}, using the flows we consider here one can develop a theory
of cobordism and hence, hopefully, a TQFT in the spirit of~\cite{Turaev}.
Another reason why we expect that our
encoding of generic flows might have non-trivial applications is that
the notion of branched spine was one of the combinatorial tools
underlying the theory of quantum hyperbolic invariants of
Baseilhac and Benedetti~\cite{BB1,BB2,BB3}.

\bigskip

\textsc{Acknowledgements} The author profited from several inspiring discussions with Riccardo Benedetti.

\section{Generic flows, streams, and stream-spines}

In this section we define the topological objects that we will deal with
in the paper and we introduce the combinatorial objects
that we will use to encode them. We then describe our first representation result,
for manifolds with generic traversing flows (that we call \emph{streams}) viewed up
to diffeomorphism.

\subsection{Generic flows}
Let $M$ be a smooth, compact, and oriented $3$-manifold with non-empty boundary,
and let $v$ be a nowhere-vanishing vector field on $M$. We will always in this paper assume
the following genericity of the tangency of $v$ to $\partial M$, first discussed by Morin~\cite{Morin}:

\begin{itemize}
\item[(\textbf{G1})]
The field $v$ is tangent to $\partial M$ only along a union $\Gamma$ of circles,
and $v$ is tangent to $\Gamma$ itself at isolated points only;
moreover, at the two sides on $\Gamma$ of each of these points,
$v$ points to opposite sides of $\Gamma$ on $\partial M$.
\end{itemize}

To graphically illustrate the situation, we introduce some terminology
that we will repeatedly employ in the rest of the paper:
\begin{itemize}
\item We call \emph{in-region} (respectively, \emph{out-region})
the union of the components of $(\partial M)\setminus\Gamma$
on which $v$ points towards the interior (respectively, the exterior) of $M$;
\item If $A$ is a point of $\Gamma$ we will say that
$A$ is \emph{concave}
if at $A$ the field $v$ points from the out-region to the in-region,
and \emph{convex} if it points
from the in-region to the out-region; this terminology is borrowed from~\cite{LNM} and
is motivated by the shape of the orbits of $v$ near $A$, see Fig.~\ref{concave/convex:fig};
\begin{figure}
    \begin{center}
    \includegraphics[scale=.6]{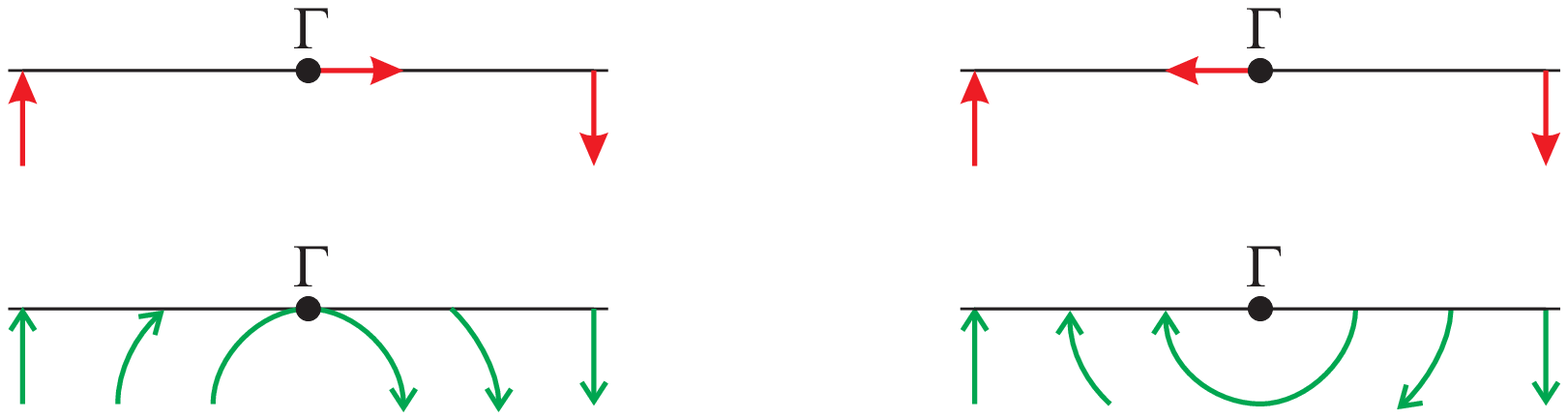}
    \end{center}
\mycap{Orbits of $v$ near a concave (left) and near a convex (right) point of $\Gamma$.
All pictures represent a cross section transverse to $\Gamma$. The top pictures
show $v$, the bottom ones show its orbits.\label{concave/convex:fig}}
\end{figure}
\item A point $A$ of $\Gamma$ at which $v$ is tangent to $\Gamma$
will be termed \emph{transition} point;
as one easily sees, there are up to
diffeomorphism only $2$ local models for the field $v$ near $A$,
as shown in Fig.~\ref{transition/types:fig}.
\begin{figure}
    \begin{center}
    \includegraphics[scale=.5]{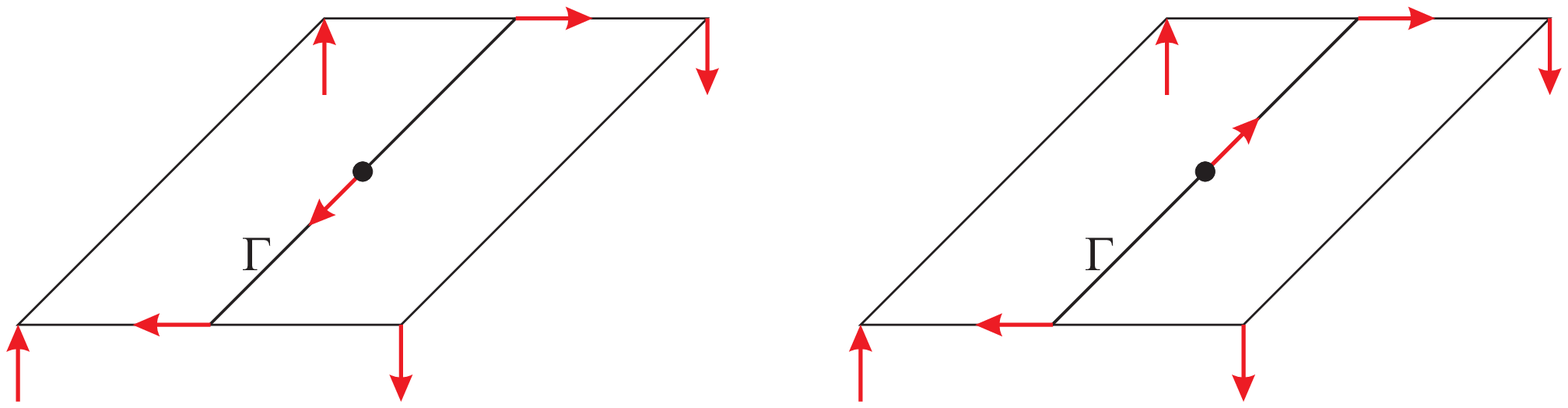}
    \end{center}
\mycap{Types of transition points; on the left $v$ points from the concave
to the convex portion of $\Gamma$, on the right from the convex to the concave portion of $\Gamma$;
note that mirror images in $3$-space of these configurations should also be taken into account (namely,
the figures are unoriented).\label{transition/types:fig}}
\end{figure}
\end{itemize}

The next result records obvious facts and two less obvious ones:

\begin{prop}\label{trans:fate:prop}
Let $A$ be a point of $\partial M$. Then, depending on where $A$ lies,
the orbit of $v$ through $A$ extends as follows:
\begin{center}
\begin{tabular}{l|l}
$A$ in the in-region & Only forward \\
$A$ in the out-region & Only backward \\
$A$ a concave point & Both forward and backward \\
$A$ a convex point & Neither forward nor backward \\
$A$ a concave-to-convex transition point  & Only backward\\
$A$ a convex-to-concave transition point & Only forward
\end{tabular}
\end{center}
\end{prop}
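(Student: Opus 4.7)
The plan is to dispose of the six cases by successively more refined local analyses at the point $A$. The in-region and out-region cases are immediate from the flow-box theorem: at such an $A$ the field $v$ is transverse to $\partial M$ with a definite inward (respectively outward) normal component, so the orbit through $A$ enters $M$ in forward (respectively backward) time only, while in the opposite direction it immediately leaves $M$.

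For the four remaining cases $A$ lies on $\Gamma$, and I would fix local coordinates $(x,y,z)$ near $A$ with $\partial M=\{z=0\}$, $M=\{z\geq 0\}$, $\Gamma=\{y=z=0\}$, and $A$ at the origin. The extension question is then equivalent to the sign of $z(t)$ along the orbit $\gamma(t)$ through $A$ for small $t\neq 0$. Since $v$ is tangent to $\partial M$ along $\Gamma$, we may write $v_z(x,y,0)=y\,h(x,y)$, and genericity gives $h(A)\neq 0$, with its sign governed by which side of $\Gamma$ carries the in-region. At a concave or convex point, $v(A)$ is tangent to $\partial M$ but transverse to $\Gamma$, say $v(A)=(0,c,0)$ with $c\neq 0$, and a direct computation gives $z'(0)=0$ and $z''(0)=c\,h(A)$. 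The sign conventions force $c\,h(A)>0$ in the concave case and $c\,h(A)<0$ in the convex case, so $z(t)>0$ (respectively $<0$) for small $t\neq 0$ and the orbit extends in both directions (respectively in neither) as claimed.

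At a transition point $v(A)$ is instead tangent to $\Gamma$, say $v(A)=(a,0,0)$ with $a\neq 0$, and now $z'(0)=z''(0)=0$, the second vanishing because $\partial_x(yh)|_{\Gamma}\equiv 0$. Computing the third derivative and discarding the terms killed by the relations $v^z\equiv 0$ on $\Gamma$ (so $\partial_x v^z(A)=0$) and $\partial_x v_z(A)=\partial_x^2 v_z(A)=0$, one is left with $z'''(0)=a\,h(A)\,\partial_x v^y(A)$. The quantity $\partial_x v^y(A)$ is non-zero by the non-degeneracy of the tangency contained in (\textbf{G1}), and its sign encodes exactly the distinction between the two local models of Figure~\ref{transition/types:fig}: in the concave-to-convex configuration $v^y$ switches from the concave sign at $x<0$ to the convex sign at $x>0$, yielding $z(t)>0$ for $t<0$ and $z(t)<0$ for $t>0$, hence only backward extension; the convex-to-concave case is symmetric and gives only forward extension.

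The main obstacle is the sign bookkeeping in the transition analysis: one must identify the surviving cubic term in the Taylor expansion of $z(t)$, confirm its non-vanishing from (\textbf{G1}), and check that the sign conclusion is consistent with the terminology of Figure~\ref{transition/types:fig} independently of the orientation chosen for $\Gamma$ and of which side of $\Gamma$ carries the in-region. Once this is done the six cases match the qualitative shape of the local pictures already included in the paper.
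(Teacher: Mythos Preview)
Your argument is correct. The local-coordinate computation is sound: at concave/convex points the leading term is $z''(0)=v^y(A)\,h(A)$ (the extra $x$-component of $v(A)$ that you suppress does not contribute, since $\partial_x v^z$ vanishes on $\Gamma$), and at transition points the surviving cubic term $z'''(0)=a\,h(A)\,\partial_x v^y(A)$ is exactly what one gets after killing everything annihilated by $v^z|_\Gamma\equiv0$ and $v^y(A)=v^z(A)=0$. The sign bookkeeping you flag as the delicate step does work out, and the non-vanishing of $\partial_x v^y(A)$ is the correct reading of the non-degeneracy implicit in (\textbf{G1}) and in the existence of only two local models.

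The paper proceeds differently for the transition cases: rather than Taylor-expanding $z(t)$, it projects $v$ to a field tangent to $\partial M$ and observes from a picture that the projected orbit through a concave-to-convex transition point lies in the out-region, whence the actual orbit (which differs from the projected one by the normal component of $v$, negative in the out-region) must exit $M$ in forward time. Your computation and this geometric argument are in fact equivalent---the projected orbit curves into the side of $\Gamma$ determined by $\mathrm{sign}\big(a\,\partial_x v^y(A)\big)$, and that side is the out-region precisely when $a\,h(A)\,\partial_x v^y(A)<0$---but the two presentations buy different things: the paper's is short and visual but leans on a figure for the key step, while yours is self-contained, makes the order of contact (quadratic versus cubic) explicit, and isolates exactly which non-degeneracy hypotheses are used.
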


\begin{proof}
The result is evident except for orbits through the transition points.
To deal with them
we first analyze what the orbits would be if $v$ were projected
to $\partial M$, which we do in Fig.~\ref{transition/orbits/first:fig}.
\begin{figure}
    \begin{center}
    \includegraphics[scale=.5]{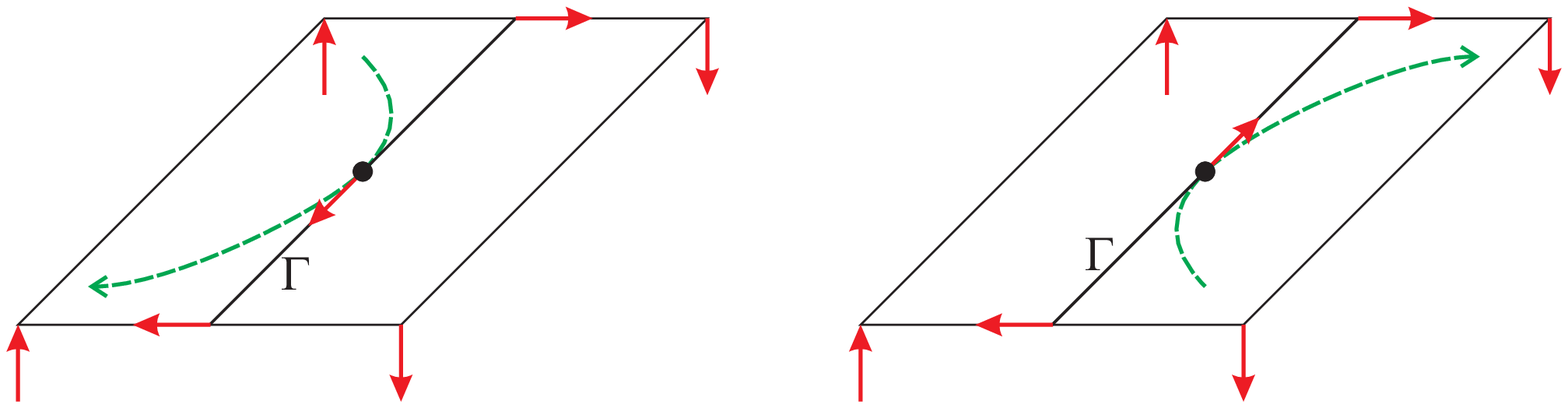}
    \end{center}
\mycap{Orbits through the transition points for the field obtained by projecting $v$
to a vector field tangent to $\partial M$.\label{transition/orbits/first:fig}}
\end{figure}
The picture shows that at the concave-to-convex transition points the orbit
of the projection of $v$ lies in the out-region, which implies that the
orbit of $v$ extends backward but not forward, while at the
convex-to-concave transition points the opposite happens.
\end{proof}

From now on an orbit of $v$ reaching a concave-to-convex transition point or
leaving from a convex-to-concave transition point
will be termed \emph{transition orbit}.

\subsection{Streams}
Our main aim in this paper is to provide a combinatorial presentation of the set of generic flows on $3$-manifolds
up to homotopy fixed on the boundary, but to achieve this aim we first need to somewhat restrict the
class of flows we consider and the equivalence relation on them.
Informally, we call \emph{stream} on a $3$-manifold $M$ a vector field $v$
satisfying (G1) such that, in addition, all the orbits of $v$ start and end on $\partial M$, and
the orbits of $v$ tangent to $\partial M$ are generic with respect to each other.
More precisely, $v$ is a stream on $M$ if it satisfies the conditions (G1)-(G4), with:
\begin{itemize}
\item[(\textbf{G2})]
Every orbit of $v$ is either a single point (a convex point of $\Gamma$)
or a closed arc with both ends on $\partial M$;
\item[(\textbf{G3})]
The transition orbits are tangent to $\partial M$ at their transition point only.
\end{itemize}

For the next and last condition we note that if an arc of an orbit of $v$ has
ends $A$ and $B$ contained in the interior of $M$ then the parallel transport
along $v$ defines a linear bijection
from the tangent space to $M$ at $A$ to that at $B$. We then require the following:

\begin{itemize}
\item[(\textbf{G4})]
Each orbit of $v$ is tangent to $\partial M$ at two points at most;
if an orbit of $v$ is tangent to $\partial M$ at two points $A$ and $B$,
that necessarily are concave points of $\Gamma$ by conditions (G2) and (G3),
then the tangent directions to $\Gamma$ at $A$ and at $B$ are transverse to
each other under the bijection defined by the parallel transport along $v$.
\end{itemize}

This last condition is illustrated in Fig.~\ref{transverse:fig}.
\begin{figure}
    \begin{center}
    \includegraphics[scale=.6]{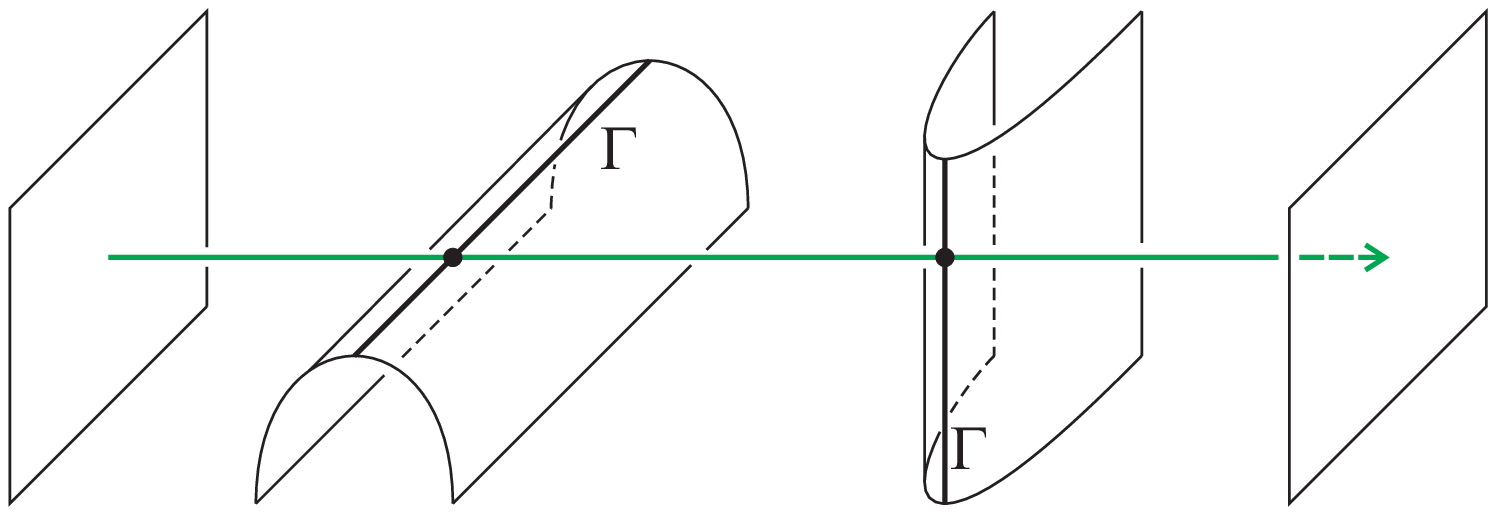}
    \end{center}
\mycap{If an orbit of $v$ is tangent to $\partial M$ at two points of $\Gamma$,
the two involved arcs of $\Gamma$ are transverse to each other under the parallel transport along $v$.\label{transverse:fig}}
\end{figure}
We will henceforth denote by $\calF_0^*$ the set of pairs $(M,v)$ with
$M$ an oriented, compact, connected $3$-manifold and $v$ a stream on $M$,
up to diffeomorphism.

\subsection{Stream-spines}
We now introduce the objects that will eventually be shown to be the combinatorial
counterparts of streams on smooth oriented $3$-manifolds.
As above, stating all the requirements takes some time and involves some new terminology.
We will then stepwise introduce 3 conditions
(S1), ({S2}), ({S3}) for a compact and connected $2$-dimensional polyhedron $P$, the combination of
which will constitute the definition of a \emph{stream-spine}. We begin with the following:

\begin{itemize}
\item[(\textbf{S1})] A neighbourhood of each point of $P$ is homeomorphic to one of the $5$ models
of Fig.~\ref{local/stream/spine:fig}.
\begin{figure}
    \begin{center}
    \includegraphics[scale=.6]{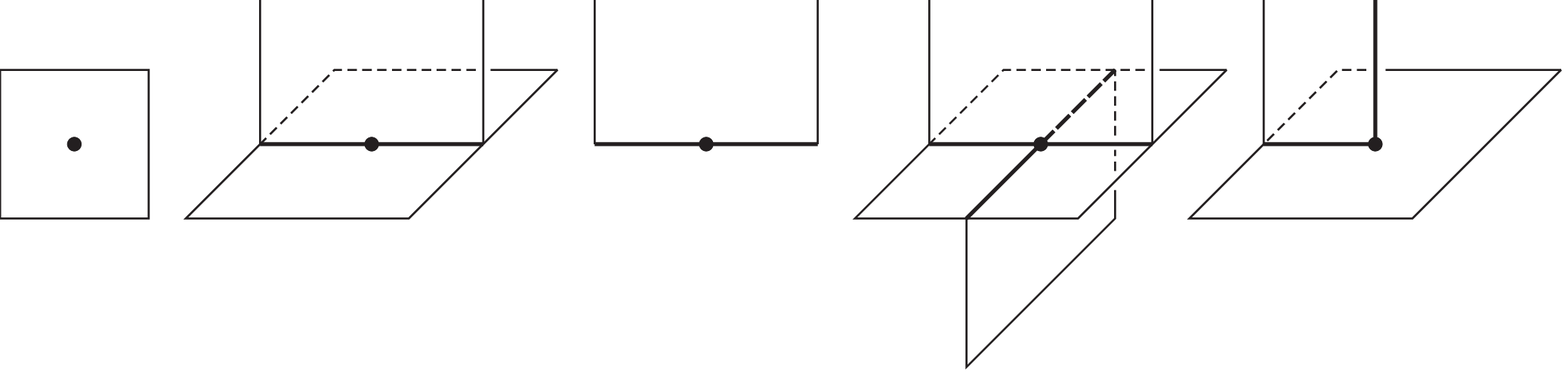}
    \end{center}
\mycap{Local aspect of a stream-spine.\label{local/stream/spine:fig}}
\end{figure}
\end{itemize}
This condition implies that $P$ consists of:
\begin{enumerate}
\item Some open surfaces, called \emph{regions},
each having a closure in $P$ which is a compact surface with possibly immersed boundary;
\item Some \emph{triple lines}, to which three regions are locally incident;
\item Some \emph{single lines}, to which only one region is locally  incident;
\item A finite number of points, called \emph{vertices}, to which six regions are locally incident;
\item A finite number of points, called \emph{spikes}, to which both a triple and a single line are incident.
\end{enumerate}

We note that a polyhedron satisfying condition (S1) is simple according
to Matveev~\cite{Matveev:AAM}, but not almost-special if single lines exist.
Our next condition was first introduced in~\cite{BP:Manuscripta};
to state it we define a \emph{screw-orientation} along an arc of triple line of $P$ as an orientation
of the arc together with a cyclic ordering of the three germs of regions of $P$ incident
to the arc, viewed up simultaneous reversal of both, as in
Fig.~\ref{screw:fig}-left.
\begin{figure}
    \begin{center}
    \includegraphics[scale=.6]{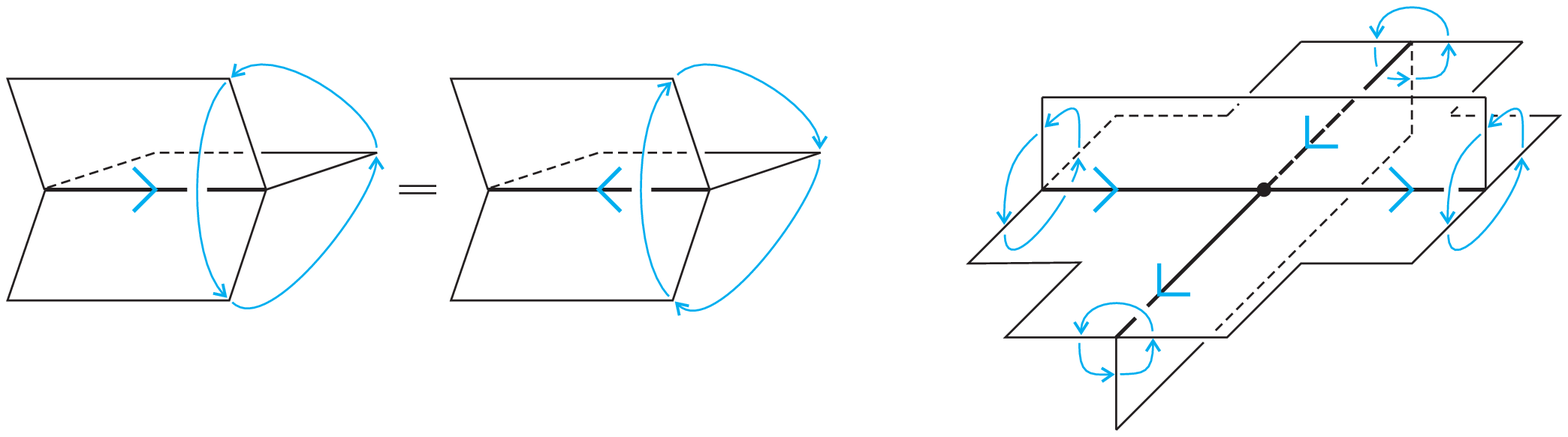}
    \end{center}
\mycap{Screw orientation along a triple line, and compatibility at vertices.\label{screw:fig}}
\end{figure}

\begin{itemize}
\item[(\textbf{S2})] Along each triple line of $P$ a screw-orientation is defined in such
a way that at each vertex the screw-orientations are as in Fig.~\ref{screw:fig}-right.
\end{itemize}

We now give the last condition of the definition of stream-spine:
\begin{itemize}
\item[(\textbf{S3})]
Each region of $P$ is orientable, and it is endowed with a specific orientation, in such a way that no
triple line is induced three times the same orientation by the regions incident to it.
\end{itemize}

We will say that two stream-spines are \emph{isomorphic} if they are related by a PL homeomorphism
respecting the screw-orientations along triple lines and the orientations of the regions, and
we will denote by $\calS_0$ the set of all stream-spines up to isomorphism.

\subsection{Stream carried by a stream-spine}

In this subsection we will show that each stream-spine uniquely defines an oriented smooth
manifold and a stream on it. To begin we take a compact polyhedron $P$ satisfying condition
({S1}) of the definition of stream-spine, namely locally appearing as in
Fig.~\ref{local/stream/spine:fig}. We will say that an embedding of $P$ in a
$3$-manifold $M$ is \emph{branched} if the following happens upon identifying $P$ with
its image in $M$ (see Fig.~\ref{branched/embedding:fig}):
\begin{figure}
    \begin{center}
    \includegraphics[scale=.6]{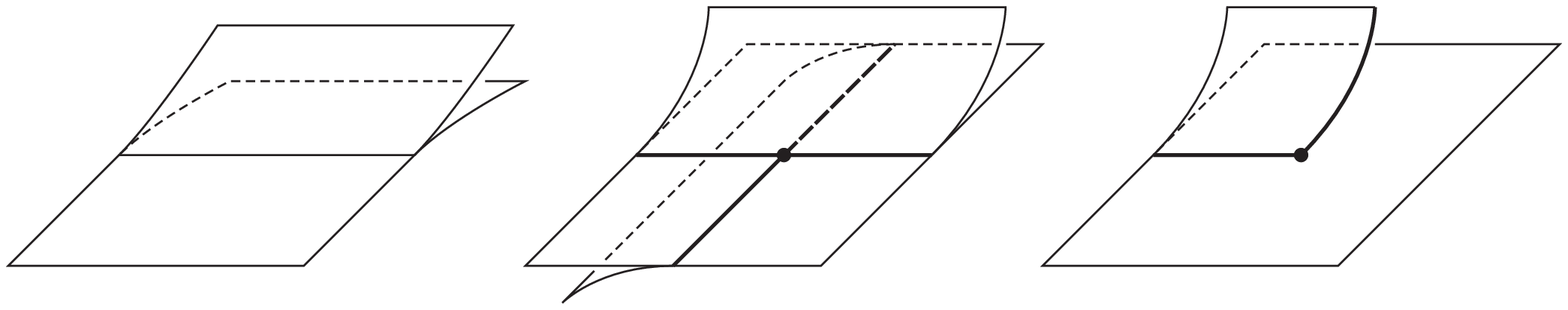}
    \end{center}
\mycap{A polyhedron locally as in Fig.~\ref{local/stream/spine:fig}
sitting in a branched fashion in a $3$-manifold
(any mirror image in $3$-space of these figures is also allowed).
\label{branched/embedding:fig}}
\end{figure}
\begin{itemize}
\item Each region of $P$ has a well-defined
tangent plane at every point;
\item If a point $A$ of $P$ lies on a triple line but is neither a vertex nor a spike,
the tangent planes at $A$ to the $3$ regions of $P$ locally incident to $A$ coincide,
and not all the $3$ regions of $P$ locally project to one and the same half-plane of this tangent plane;
\item At a vertex $A$ of $P$ the tangent planes at $A$ to the $6$ regions of $P$ locally incident to $A$ coincide;
\item At a spike $A$ of $P$ the tangent planes at $A$ to the $2$ regions of $P$ locally incident to $A$ coincide.
\end{itemize}

\begin{prop}\label{spine:to:stream:prop}
To any  stream-spine $P$ there correspond a smooth compact oriented $3$-manifold $M$ and
a stream $v$ on $M$ such that $P$ embeds in a branched fashion in $M$, the field
$v$ is everywhere positively transversal to $P$, and $M$ is homeomorphic to a regular
neighbourhood of $P$ in $M$; the pair $(M,v)$ is well-defined up to oriented diffeomorphism, therefore
setting $\varphi(P)=(M,v)$ one gets a well-defined map $\varphi_0^*:\calS_0\to\calF_0^*$.
\end{prop}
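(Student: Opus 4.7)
The plan is to construct $M$ and $v$ by gluing together standard thickened models of each of the five local pieces of $P$ described in Figure~\ref{local/stream/spine:fig}, using the combinatorial data (S2) and (S3) as gluing instructions.

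For each local model I would fix a canonical branched embedding in $\mathbb{R}^3$ in which, at every multiple point, the regions share a common tangent plane as prescribed and illustrated in Figure~\ref{branched/embedding:fig}; the standard thickened model is then an $\varepsilon$-thickening of this embedded piece, carrying the constant vector field $v=\partial/\partial z$. The orientation of each region given by (S3) is identified with the direction in which $\partial/\partial z$ crosses the region positively, and the assumption in (S3) that no triple line receives the same orientation from all three incident regions ensures that the three one-sided thickenings can be amalgamated into a single embedded neighborhood of the triple line rather than two copies of a half-space glued contradictorily. Along each triple line the screw-orientation (S2) picks out, up to simultaneous reversal, the cyclic arrangement of the three germs, and the compatibility condition at vertices from Figure~\ref{screw:fig}-right says precisely that the four local triple-line models meeting at a vertex fit into a single vertex neighborhood; at a spike, an analogous forced compatibility of (S2) and (S3) fixes the local model uniquely. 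Gluing these canonical pieces along common faces produces an oriented compact $3$-manifold $M$ in which $P$ is branched-embedded, with a global smooth nowhere-zero field $v$ positively transversal to $P$, and $M$ is a regular neighborhood of $P$ by construction.

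Next I would check that $v$ is in fact a stream. Condition (G1) is verified by reading off the trace of $v$ on $\partial M$ in each piece: the tangency locus $\Gamma$ consists of the arcs along which the single lines and the free edges of spikes lift to the boundary, concavity or convexity is determined by which side of the sole incident region the field exits, and the transition points of Figure~\ref{transition/types:fig} correspond exactly to the spikes. For (G2) note that $M\setminus P$ is a disjoint union of open cells, each fibered as an interval bundle by the flow of $v$; hence every orbit is either a single point (a convex point of $\Gamma$) or a closed arc running from $\partial M$ to $\partial M$, and no closed orbits occur since $v$ gains ``height'' relative to $P$ monotonically inside each cell. Conditions (G3)--(G4) follow from the fact that the chosen local models are already in general position: at a spike the transition orbit is tangent to $\partial M$ only at the spike point itself, and two concave points of $\Gamma$ lying on one orbit correspond to two transverse branches of the single-line stratum in the local model, which forces transversality of their tangent directions under parallel transport along $v$.

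Well-definedness up to oriented diffeomorphism follows because any two admissible choices of neighborhood sizes or parametrizations for the local models differ by an ambient isotopy supported within each piece, giving an orientation-preserving diffeomorphism between the two global constructions and intertwining the two streams. The main obstacle I expect is the careful verification that the local models at vertices and spikes are genuinely determined by the screw-orientation and region-orientation data, leaving no hidden combinatorial ambiguity, and that the genericity clauses (G3) and (G4) come for free from the local geometry rather than as additional hypotheses on $P$; this is the heart of the construction, where the branched-spine formalism developed in~\cite{BP:Manuscripta} and~\cite{LNM} must be invoked to pin down the amalgamation uniquely.
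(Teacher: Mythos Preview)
Your overall strategy---thicken $P$ by gluing canonical local blocks, using (S2) and (S3) as orientation instructions, and then read off the stream from the constant vertical field---is essentially the same as the paper's. The paper organizes the gluing slightly differently (it first thickens a neighbourhood $U$ of the singular set into blocks oriented via the screw-orientations, then thickens the complementary surface $S$ as the \emph{trivial} product $S\times I$, and finally glues along highlighted rectangles and annuli), but this is a cosmetic difference, not a mathematical one.

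There is, however, a genuine error in your identification of the boundary tangency locus. You write that $\Gamma$ ``consists of the arcs along which the single lines and the free edges of spikes lift to the boundary,'' and that the doubly-tangent orbits of (G4) ``correspond to two transverse branches of the single-line stratum.'' This is backwards. In the thickening, a \emph{triple} line of $P$ produces a \emph{concave} tangency arc of $v$ to $\partial M$ (look at the cross-section: the three half-discs meeting along the line force the flow lines near the boundary to curve inward), while a \emph{single} line produces a \emph{convex} tangency arc. Consequently the orbits of (G4), which are tangent to $\partial M$ at two \emph{concave} points, arise from the \emph{vertices} of $P$ (where four triple-line germs meet), not from any single-line configuration; and the transition orbits of (G3) arise from the spikes, where a triple line meets a single line. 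Once you correct this dictionary the rest of your verification of (G1)--(G4) goes through, but as written your argument for (G4) is attached to the wrong local model and would not actually establish the transversality claim.
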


\begin{proof}
Our first task is to show that $P$ thickens in a PL sense to a well-defined
oriented manifold $M$ (later we will need to describe a smooth structure for
$M$ and the field $v$). This argument extends that of~\cite{BP:Manuscripta}.
Let us denote by $U$ a regular neighbourhood in $P$ of the union
of the triple lines. We observe that $U$ can be seen as a union of fragments as in
Fig.~\ref{thicken:fig}-top, that we thicken
\begin{figure}
    \begin{center}
    \includegraphics[scale=.6]{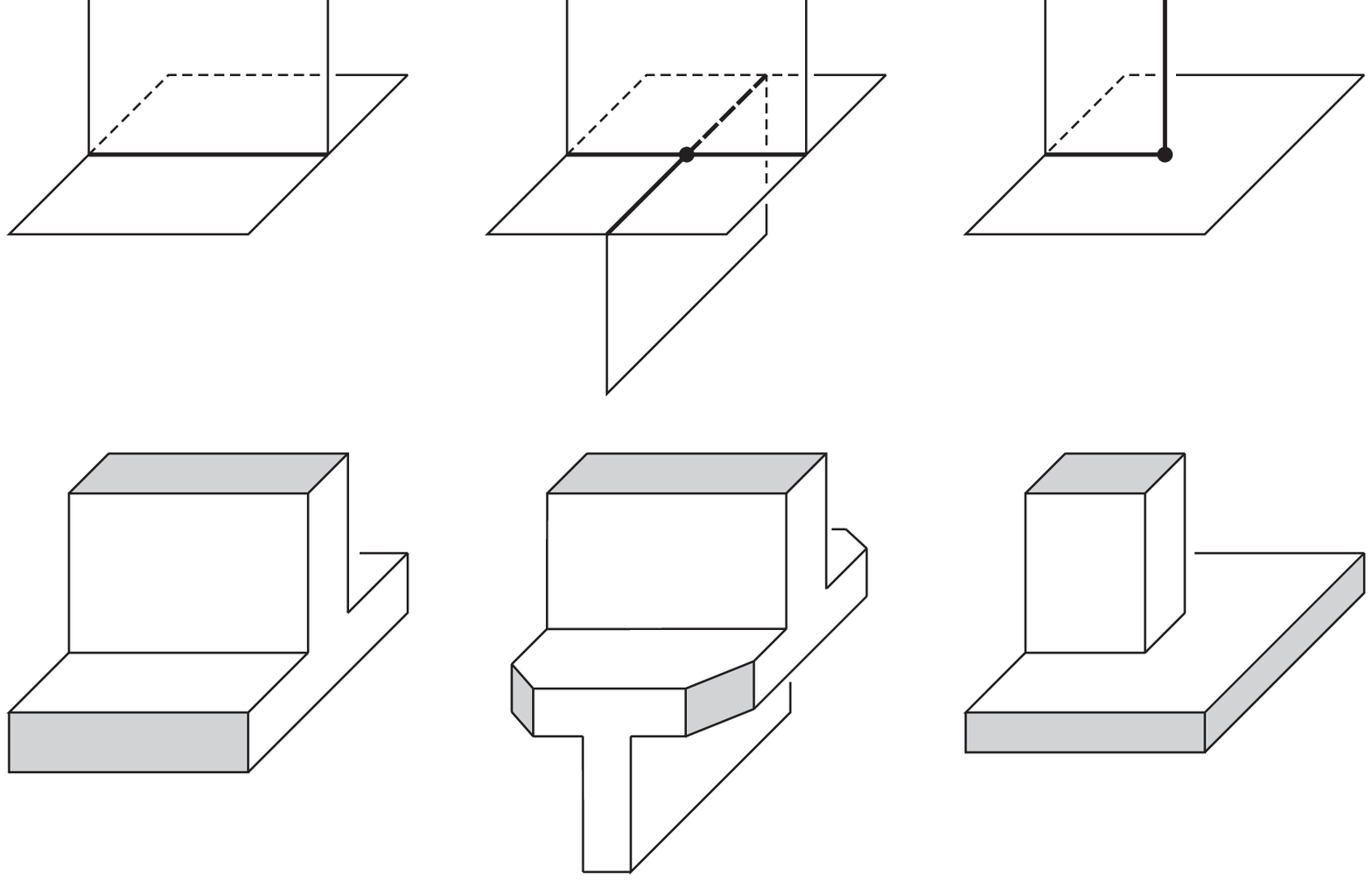}
    \end{center}
\mycap{Blocks obtained by thickening fragments
of a neighbourhood of the union of the triple lines.\label{thicken:fig}}
\end{figure}
as shown in the bottom part of the same figure, giving each block the
orientation such that the screw-orientations along the
portions of triple lines of $P$ within each block are positive.
Note that on the boundary of each block there are some T-shaped regions and that
some rectangles are highlighted. Following the
way $U$ is reassembled from the fragments into which it was decomposed, we can now
assemble the blocks by gluing together the T's on their boundary.
(Note that the gluing between two T's need not identify the vertical legs to each other,
so each T should actually be thought of as a Y: the three legs play symmetric r\^oles.)
Since the gluings automatically reverse the orientation, the result is an
oriented manifold, on the boundary of which we have some highlighted strips,
each having the shape of a rectangle or of an annulus.
Now we turn to the closure in $P$ of the complement of $U$, that we denote by $S$.
Of course $S$ is a surface with boundary, and on $\partial S$ we can highlight the
arcs and circles shared with $U$.  (The rest of $\partial S$ consists of arcs lying on single lines of $P$.)
We then take the product $S\times I$ ---this is a crucial choice that will be discussed below---
and note that the highlighted arcs and circles on $\partial S$ give
highlighted rectangles and annuli on $\partial (S\times I)$. We are only left to glue these
rectangles and annuli to those on the boundary of the assembled blocks,
respecting the way $S$ is glued to $U$ and making sure the orientation is reversed.
The result is the required manifold $M$.

We must now explain how to smoothen $M$ and how to choose the stream $v$. Away from the triple and single lines of $P$
the manifold $M$ is the product $S\times I$ with $S$ a surface, so it is sufficient to
smoothen $S$ and to define $v$ to be parallel to the $I$ factor and positively transversal to $S$.
(This justifies our choice of thickening $S$ as a trivial rather than some other  $I$-bundle.)
Along the triple and single lines of $P$ we extend this construction as suggested
in a cross-section in Fig.~\ref{smooth/thick:fig}.
\begin{figure}
    \begin{center}
    \includegraphics[scale=.6]{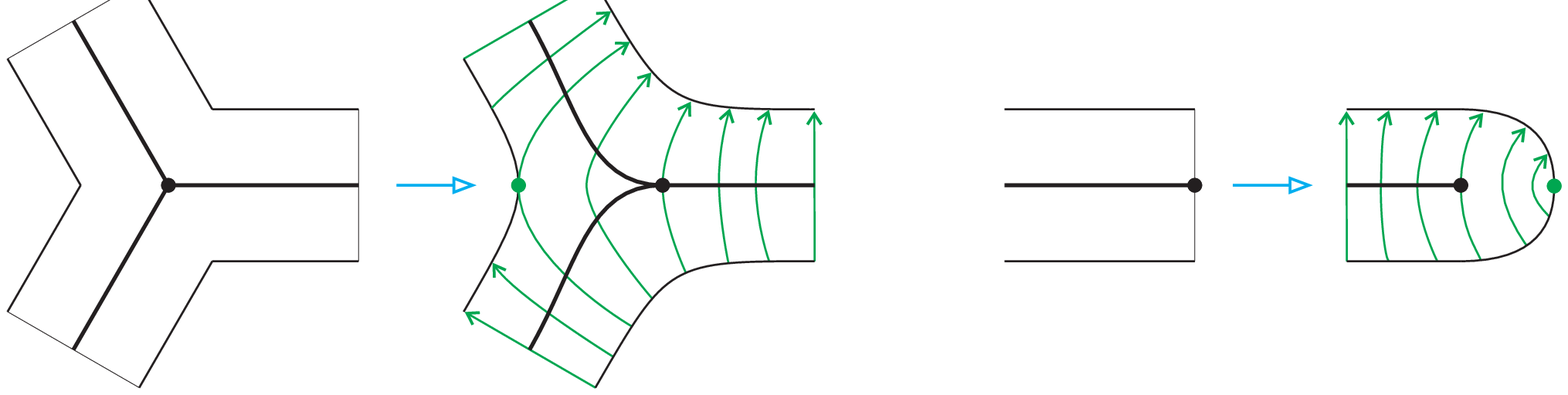}
    \end{center}
\mycap{The stream along triple and single lines.\label{smooth/thick:fig}}
\end{figure}
Note that a triple line of $P$ gives rise to a concave tangency
line of $v$ to $\partial M$, and that a single line of $P$ gives rise to a
convex tangency line. To conclude we must illustrate the extension of the construction of $v$ near
vertices and near spikes, which we do in two examples in Fig.~\ref{vert/spike/thick:fig}.
\begin{figure}
    \begin{center}
    \includegraphics[scale=.6]{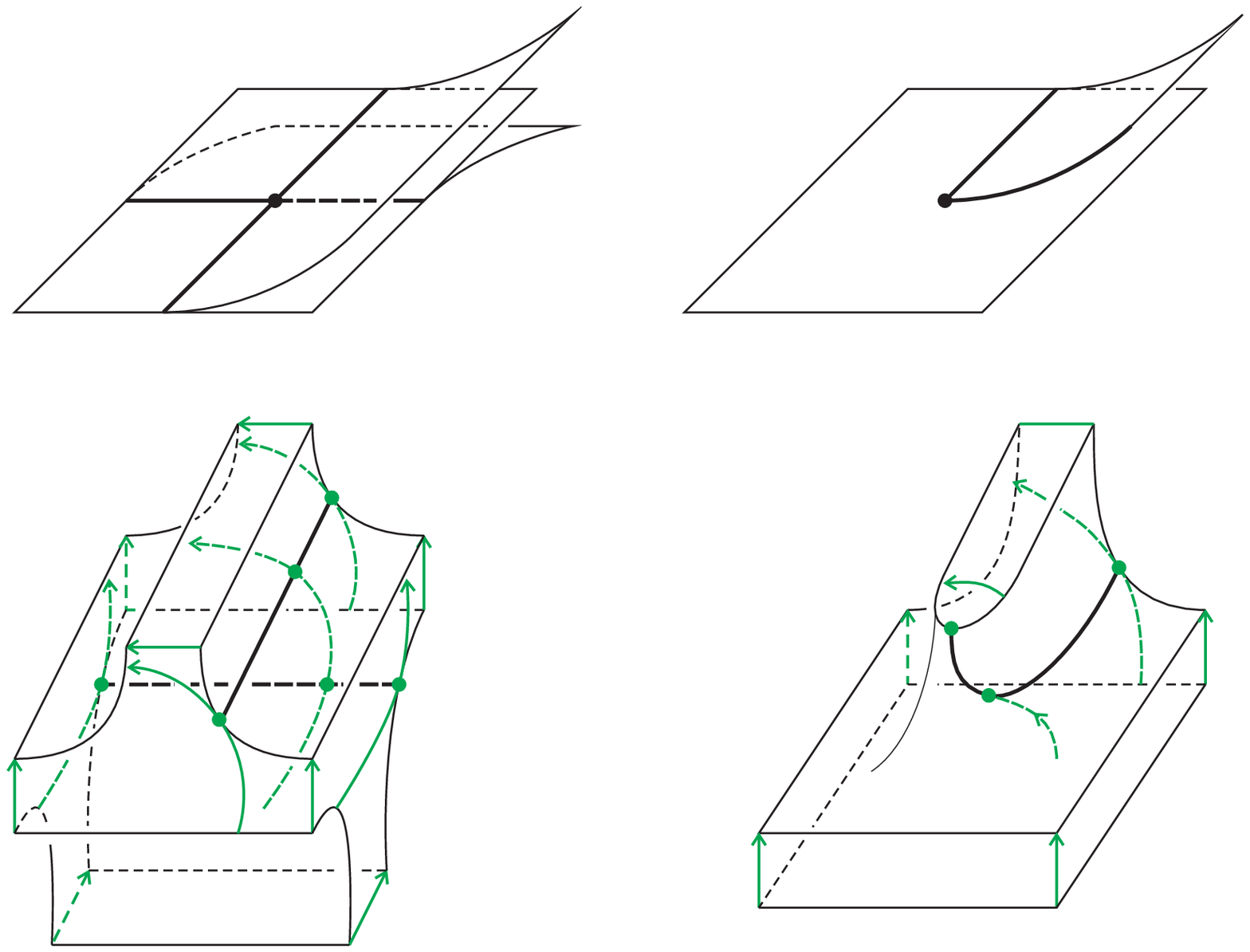}
    \end{center}
\mycap{Stream carried by a stream-spine near a vertex and near a spike.\label{vert/spike/thick:fig}}
\end{figure}
In the figure we represent $v$ by showing some of its orbits. Note that:
\begin{itemize}
\item In both cases the local configurations of $v$ near $\partial M$ are as in condition (G1) of the definition of stream;
\item The orbits of $v$ are closed arcs or points, as in condition (G2);
\item To a vertex of $P$ there corresponds an orbit of $v$ that is tangent to $\partial M$ at two points,
in a concave fashion and respecting the transversality condition (G4);
\item To a spike of $P$ there corresponds a transition orbit of $v$ satisfying condition (G3).
\end{itemize}
This shows that $v$ is indeed a stream on $M$. Since che construction of $(M,v)$ is uniquely
determined by $P$, the proof is complete.
\end{proof}

\subsection{The in-backward and the out-forward\\
stream-spines of a stream}
In this subsection we prove that the construction of Proposition~\ref{spine:to:stream:prop}
can be reversed, namely that the map $\varphi_0^*:\calS_0\to\calF_0^*$ is bijective.
More exactly, we will see that the topological
construction has two inverses that are
equivalent to each other ---but not obviously so. If $v$ is a stream on a $3$-manifold $M$ we define:
\begin{itemize}
\item The \emph{in-backward} polyhedron associated to $(M,v)$
as the closure of the in-region of $\partial M$
union the set of all points $A$ such that
there is an orbit of $v$ going from $A$ to a concave or transition point of $\partial M$;
\item The \emph{out-forward} polyhedron associated to $(M,v)$
as the closure of the out-region of $\partial M$ union the set of all points $A$ such that
there is an orbit of $v$ going from a concave or transition point of $\partial M$ to $A$.
\end{itemize}

\begin{prop}\label{stream:to:spine:prop}\ \\ \vspace{-.4cm}
\begin{itemize}
\item Let $v$ be a stream on $M$. Then the
in-backward and out-forward polyhedra associated to $(M,v)$
satisfy condition (S1) of the definition of stream-spine;
moreover each of their regions shares some point with the in-region or with the out-region of $\partial M$, and
it can be oriented so that at these points the field $v$ is positive transversal to it;
with this orientation on each region, the
in-backward and out-forward polyhedra associated to $(M,v)$ are stream-spines, they
are isomorphic to each other and
via Proposition~\ref{spine:to:stream:prop} they both define
the pair $(M,v)$.
\item If $P$ is a stream-spine and $(M,v)$ is the associated manifold-stream pair
as in Proposition~\ref{spine:to:stream:prop}, then the
in-backward and out-forward polyhedra associated to $(M,v)$
are isomorphic to $P$.
\end{itemize}
\end{prop}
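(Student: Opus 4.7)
The plan is to handle the two items separately: first verify that $P_{\mathrm{in}}$ and $P_{\mathrm{out}}$ are genuine stream-spines recovering $(M,v)$ via $\varphi_0^*$, and then show that applied to a stream-spine $P$ this construction returns $P$. Most of the work lies in the first item, and the argument for $P_{\mathrm{out}}$ will mirror that for $P_{\mathrm{in}}$ under reversal of time together with swapping of in/out and concave/convex, so I would concentrate on $P_{\mathrm{in}}$.

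To check (S1) I would carry out a stratified local analysis, classifying each point $A\in P_{\mathrm{in}}$ according to whether $A\in\partial M$ and its position there, and according to the intersection of the orbit of $v$ through $A$ with the singular set $\Gamma$. Generic points of the in-region give model~1 (regular). A point on a convex arc belongs only to the closure of the in-region, since by Proposition~\ref{trans:fate:prop} convex points have no orbit extensions; so $P_{\mathrm{in}}$ simply terminates there, producing a single (free) edge. At a generic concave-arc point, the in-region closure and the backward-flow sheet emanating from the arc share a tangent plane (both tangent to $\partial M$ along the arc), so they are $C^1$-glued into one smooth sheet and the concave arc is a regular interior curve rather than an edge. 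Triple lines then appear where two such backward-flow sheets cross: this is precisely the setting of (G4), which guarantees that two concave tangencies on a single orbit produce transversal sheets, and the crossing is a clean triple line because, of the two sheets meeting, one has the arc as boundary while the other crosses through. Vertices arise at the zero-dimensional crossings dictated by the two-tangency configuration of (G4), and spikes at transition orbits, using (G3) and Proposition~\ref{trans:fate:prop}.

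For (S3) I would orient each region so that $v$ is positively transversal: this is well-defined on a connected region by continuity, and the three regions meeting at a triple line cannot all co-orient because they play asymmetric roles with respect to $v$. Condition (S2) is then read off the cyclic ordering of the three germs around the triple line forced by the flow; compatibility at vertices in the sense of Fig.~\ref{screw:fig} is a case check against the two local models of Fig.~\ref{transverse:fig}. To see that $\varphi_0^*(P_{\mathrm{in}})=(M,v)$ I would exhibit the flow as realising $M$ as a mapping-cylinder-like neighbourhood of $P_{\mathrm{in}}$: each point of $M$ has a well-defined first future intersection with $P_{\mathrm{in}}$, giving a projection $M\to P_{\mathrm{in}}$ whose $v$-directed fibres match the thickening described in Proposition~\ref{spine:to:stream:prop}, hence yielding $(M,v)$ up to oriented diffeomorphism. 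The isomorphism $P_{\mathrm{in}}\cong P_{\mathrm{out}}$ then comes from the flow itself: every orbit of $v$ meets $P_{\mathrm{in}}$ in a prefix and $P_{\mathrm{out}}$ in a suffix, and the induced bijection preserves all the combinatorial structure.

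For the second item, I would start from a stream-spine $P$ with $(M,v)=\varphi_0^*(P)$ and use the explicit construction of Proposition~\ref{spine:to:stream:prop}: it embeds $P$ in $M$ transversally to $v$, realising triple lines as concave tangency lines, single lines as convex tangency lines, and spikes as transition points (Figs.~\ref{smooth/thick:fig} and~\ref{vert/spike/thick:fig}); moreover the ``upper'' faces of the thickened regions are exactly the in-region of $\partial M$. A direct inspection of each of the five local models then identifies $P$ with $P_{\mathrm{in}}$ as a set, and the screw and region orientations match by construction. I expect the main obstacle to be the local case-analysis in item~(1): ruling out spurious double-line configurations via the $C^1$-gluing at concave arcs, and verifying in all combinatorial types that the screw-orientations imposed by $v$ fit the vertex compatibility of Fig.~\ref{screw:fig}. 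Once this stratified local work is complete, the global statements follow fairly mechanically from the uniqueness of the regular-neighbourhood construction.
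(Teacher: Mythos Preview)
Your local analysis has a genuine error in locating the triple lines of $P_{\mathrm{in}}$. You are right that at a generic concave point $C$ the closure of the in-region and the backward sheet emanating from the arc through $C$ share the tangent plane $T_C(\partial M)$ and hence glue $C^1$ into a smooth sheet, so the concave arc itself is a regular locus of $P_{\mathrm{in}}$. But the claim that triple lines then arise where \emph{two} backward sheets cross, governed by~(G4), misses the actual mechanism. Take a generic orbit with a \emph{single} concave tangency, $D\to C\to E$, where $D$ lies in the in-region and $C$ on a concave arc~$\alpha$. The backward sheet $S$ from $\alpha$ meets the in-region again along the landing curve $\gamma=\{D':\text{the forward orbit of }D'\text{ is tangent to }\partial M\text{ along }\alpha\}$, and along $\gamma$ the in-region is a full surface while $S$ is a half-surface transverse to it (since $S$ contains the $v$-direction, which is transverse to $\partial M$ on the in-region). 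That is already the triple-line model: each concave arc produces a triple line at its landing curve, with no second sheet and no appeal to~(G4). This is precisely what the paper means by ``concave tangency lines of $v$ to $\partial M$ generate triple lines'' (Fig.~\ref{P/to/M/to/IO:fig}).

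Condition~(G4) enters one level up: a double-tangency orbit $D\to C_1\to C_2\to E$ lands at a point $D$ where the two landing curves $\gamma_1,\gamma_2$ cross transversally (this transversality is exactly the content of~(G4)), and the in-region together with the two half-sheets $S_1,S_2$ gives the six-region \emph{vertex} model at $D$ (Fig.~\ref{G4/to/vertex:fig}) --- not a triple line. A dimension count already flags the problem: double-tangency orbits form a $0$-dimensional family and cannot account for the $1$-dimensional triple locus. With the triple lines misplaced, your verification of~(S1), the screw-orientation check for~(S2), and the subsequent identification of $\varphi_0^*(P_{\mathrm{in}})$ with $(M,v)$ cannot go through as written.
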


\begin{proof}
Most of the assertions are easy, so we confine ourselves to the main points.
It is first of all obvious that away from the special orbits of $v$ as in conditions
(G3) and (G4) the concave tangency lines of $v$ to $\partial M$ generate
triple lines in the in-backward and out-forward polyhedra associated to $(M,v)$, while convex
tangency lines generate single lines. Moreover, if from a stream-spine $P$
we go to $(M,v)$ and then to the associated
in-backward and out-forward polyhedra, away from the
vertices and spikes of $P$ we see that these polyhedra
are naturally isomorphic to $P$, as shown in a cross-section in Fig.~\ref{P/to/M/to/IO:fig}.
\begin{figure}
    \begin{center}
    \includegraphics[scale=.6]{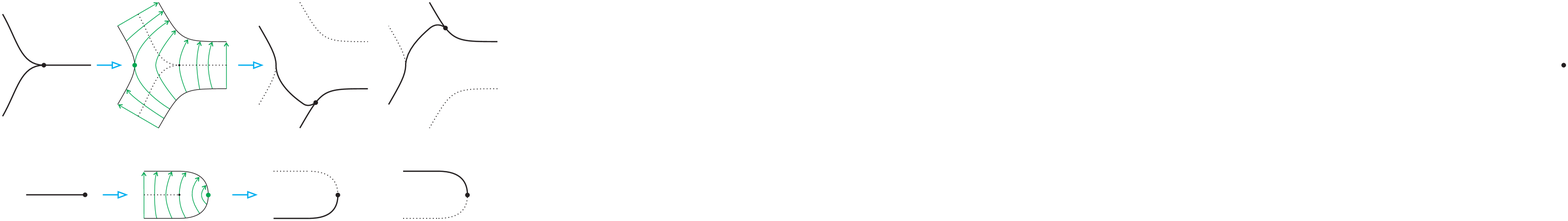}
    \end{center}
\mycap{From a stream-spine to a manifold-stream pair to
its in-backward and out-forward polyhedra.
Cross-section away from the vertices and spikes of the stream-spine and away from
the special orbits of the stream.\label{P/to/M/to/IO:fig}}
\end{figure}

The fact that an orbit of $v$ as in condition (G4) generates a vertex in
the in-backward and out-forward polyhedra associated to $(M,v)$
was already shown in~\cite{LNM}, but we reproduce the argument here
for the sake of completeness, showing in Fig.~\ref{G4/to/vertex:fig}-left,
top and bottom,
\begin{figure}
    \begin{center}
    \includegraphics[scale=.6]{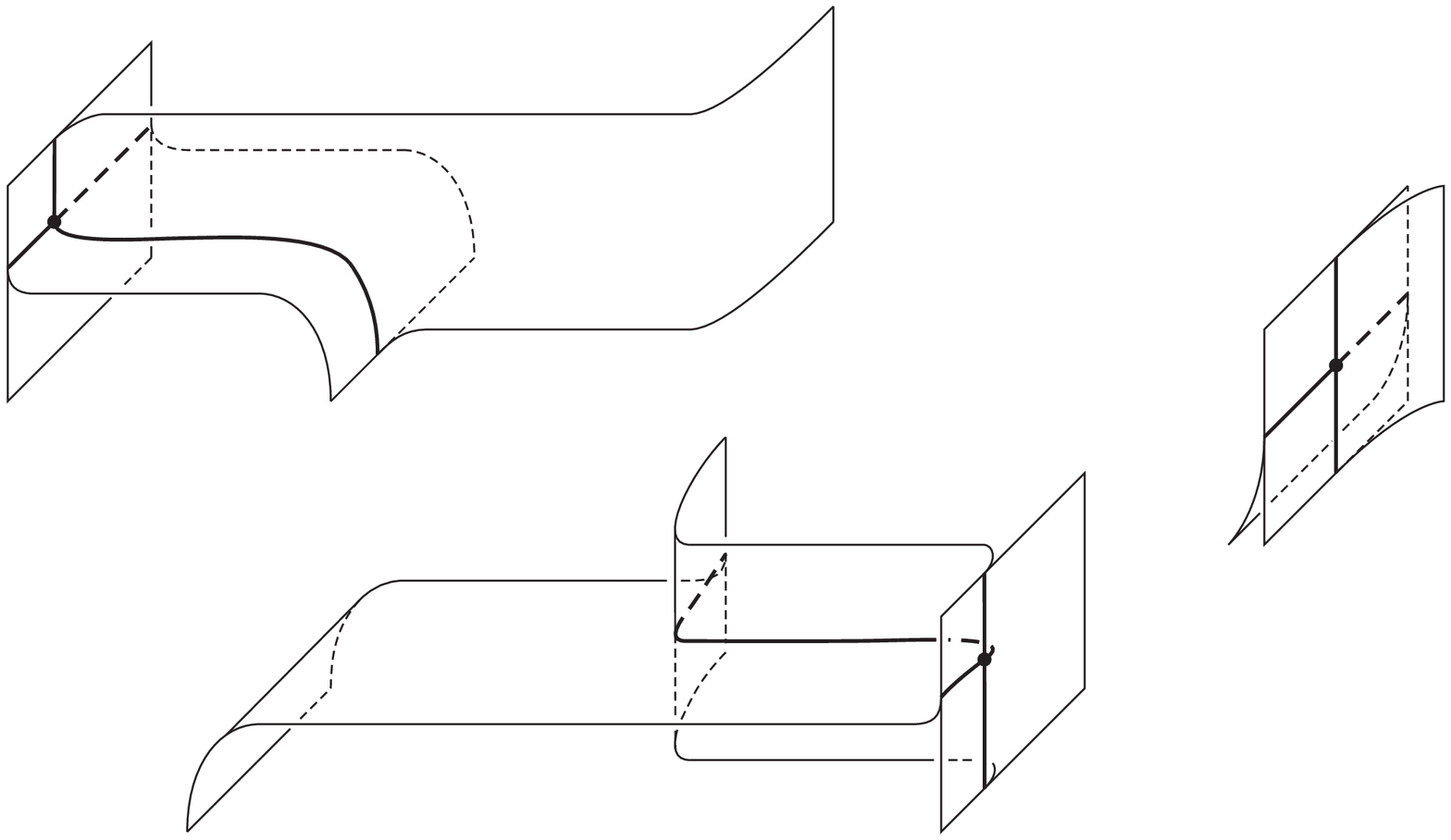}
    \end{center}
\mycap{An orbit of a stream doubly tangent to the boundary in a concave fashion
generates a vertex in the in-backward and in the out-forward stream-spines.\label{G4/to/vertex:fig}}
\end{figure}
the in-backward and the out-forward spines near the orbit of Fig.~\ref{transverse:fig}.
Both these spines are locally isomorphic to the stream-spine shown on the right, to
which Proposition~\ref{spine:to:stream:prop} associates precisely an orbit as in Fig.~\ref{transverse:fig}.

We are left to deal with transition points and with spikes.
Let us concentrate on a
concave-to-convex transition point as in Fig.~\ref{transition/types:fig}-left,
but mirrored and rotated in $3$-space for convenience.
In this case the transition orbit
extends backward (and not forward), and the
locally associated in-backward polyhedron is easy to describe,
which we do in Fig.~\ref{trans/spike:fig}-top.
\begin{figure}
    \begin{center}
    \includegraphics[scale=.6]{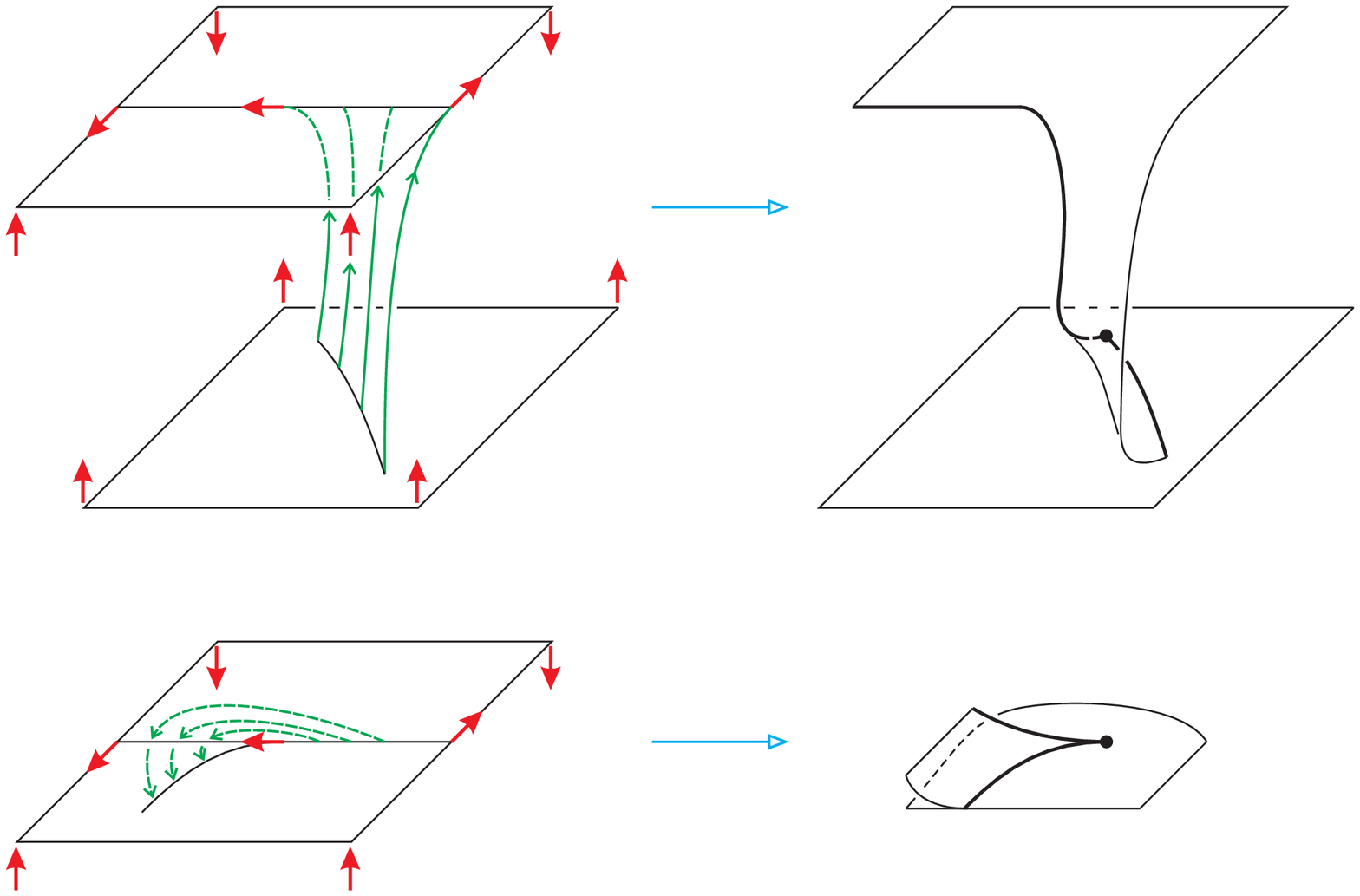}
    \end{center}
\mycap{From a transition point to a spike in the in-backward and in the out-forward associated polyhedra.\label{trans/spike:fig}}
\end{figure}
The out-forward polyhedron is instead slightly more complicated to understand, since the orbits
of $v$ starting from the concave line near the transition point finish on points close to the transition one, as
illustrated in Fig.~\ref{trans/spike:fig}-bottom. The pictures shows that the spikes thus generated
are indeed locally the same. Moreover, the
concave-to-convex configuration of $v$ near $\partial M$
is precisely that generated by a spike as in Fig.~\ref{vert/spike/thick:fig}-right, which is again of the same type.
This concludes the proof.
\end{proof}

Combining Propositions~\ref{spine:to:stream:prop} and~\ref{stream:to:spine:prop} we get the following
main result of this section:

\begin{thm}\label{stream:homeo:thm}
The map $\varphi_0^*:\calS_0\to\calF_0^*$ from the set of stream-spines up to isomorphism
to the set of streams on $3$-manifolds up to diffeomorphism is a bijection.
\end{thm}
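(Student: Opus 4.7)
The plan is to read the theorem as a direct corollary of Propositions~\ref{spine:to:stream:prop} and~\ref{stream:to:spine:prop}, which have already done all the substantive work: the former produces the map $\varphi_0^*$ and shows it is well-defined on isomorphism classes, while the latter supplies a two-sided inverse. All that remains is to package these facts into a surjectivity plus injectivity argument.

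For surjectivity, I would start from an arbitrary stream $(M,v) \in \calF_0^*$ and consider the in-backward polyhedron $P$ associated to $(M,v)$. By the first bullet of Proposition~\ref{stream:to:spine:prop}, $P$ satisfies (S1), its regions inherit canonical orientations making $v$ positively transversal to them (condition (S3)), and the screw-orientations along its triple lines satisfy (S2); thus $P \in \calS_0$. The same bullet asserts that applying Proposition~\ref{spine:to:stream:prop} to $P$ returns the pair $(M,v)$ up to oriented diffeomorphism, i.e.\ $\varphi_0^*(P) = (M,v)$ in $\calF_0^*$. Hence $\varphi_0^*$ is onto. (One could equally well use the out-forward polyhedron here; the first bullet guarantees the two are isomorphic, so either choice works.)

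For injectivity, suppose $P_0,P_1 \in \calS_0$ satisfy $\varphi_0^*(P_0) = \varphi_0^*(P_1)$, and denote the common image by $(M,v)$. Let $Q$ be the in-backward polyhedron associated to $(M,v)$. The second bullet of Proposition~\ref{stream:to:spine:prop} then states that the in-backward polyhedron of the stream recovered from a stream-spine is isomorphic to the original stream-spine; applied to $P_0$ and $P_1$ separately, this gives stream-spine isomorphisms $P_0 \cong Q \cong P_1$. Composing, $P_0$ and $P_1$ define the same element of $\calS_0$, proving injectivity.

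There is no real obstacle at this stage: the two propositions have been stated in precisely the form needed so that $\varphi_0^*$ and the ``in-backward polyhedron'' construction are mutual inverses on the nose (once one passes to isomorphism classes on $\calS_0$ and to diffeomorphism classes on $\calF_0^*$). The only point worth flagging explicitly in the write-up is that the ambiguity between the in-backward and out-forward constructions is harmless, since the first part of Proposition~\ref{stream:to:spine:prop} says the two polyhedra are canonically isomorphic as stream-spines; this is what makes the inverse map $(M,v) \mapsto [P]$ well-defined on $\calF_0^*$ and allows the bijection to be stated without privileging either choice.
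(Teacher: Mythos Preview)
Your proposal is correct and matches the paper's approach exactly: the paper presents Theorem~\ref{stream:homeo:thm} as an immediate consequence of Propositions~\ref{spine:to:stream:prop} and~\ref{stream:to:spine:prop}, and your surjectivity/injectivity argument simply unpacks this combination.
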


\section{Stream-homotopy and\\ sliding moves on stream-spines}

In this section we consider a natural equivalence relation on streams, and we translate it into combinatorial moves on stream-spines.

\subsection{Elementary homotopy catastrophes}

Let $M$ be an oriented $3$-manifold with non-empty boundary. On the set $\calF_0^*$ of streams on $M$ we define
\emph{stream-homotopy} as the equivalence relation of homotopy through vector fields with
fixed configuration on $\partial M$ and all orbits homeomorphic to closed intervals or
to points. We then define $\calF_0$ as the quotient of $\calF_0^*$ under the
equivalence relation of stream-homotopy.
The next result shows how to factor this relation into easier ones:

\begin{prop}\label{catastrophes:prop}
Stream-homotopy is generated by isotopy and by the
elementary moves shown in
Figg.~\ref{20/catastrophe:fig} to~\ref{catastrophe/trans/in:fig}.
\begin{figure}
    \begin{center}
    \includegraphics[scale=.6]{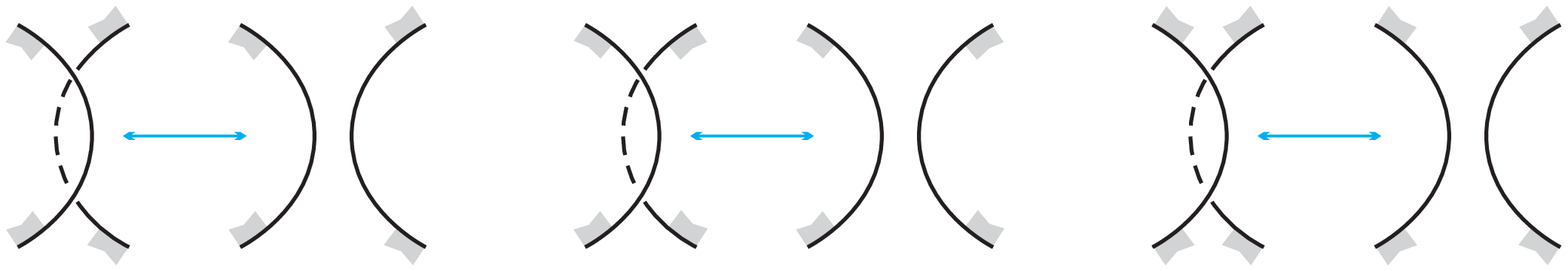}
    \end{center}
\mycap{Catastrophes corresponding to an orbit being twice concavely tangent to
the boundary but not in a transverse fashion. The pictures
show portions of the concave tangency line as seen looking in the direction
of the vector field, and they suggest to what part of it
the boundary of the manifold bends
\label{20/catastrophe:fig}}
\end{figure}
\begin{figure}
    \begin{center}
    \includegraphics[scale=.6]{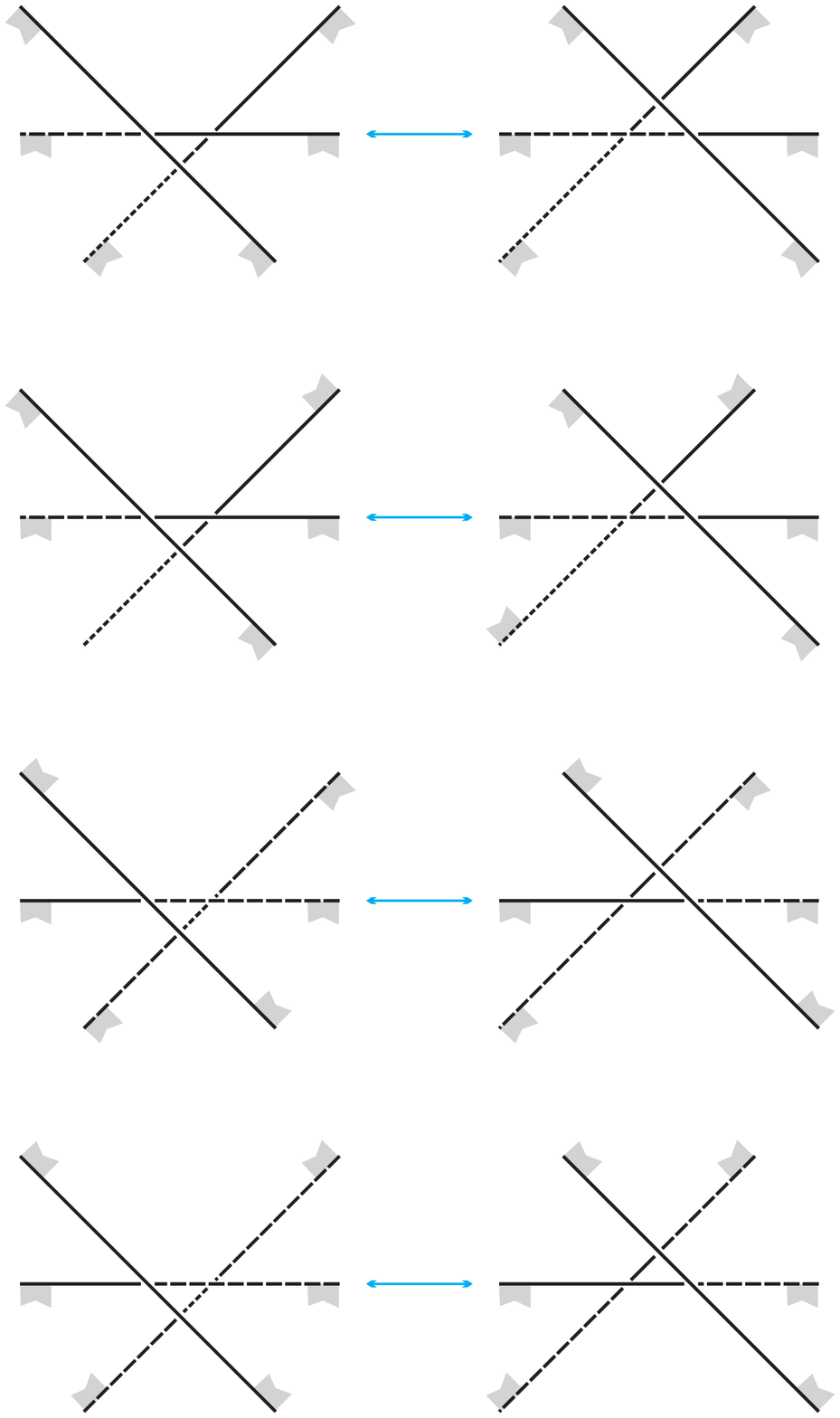}
    \end{center}
\mycap{Catastrophes
corresponding to an
orbit being thrice concavely tangent to the boundary in a transverse fashion.\label{32/catastrophe:fig}}
\end{figure}
\begin{figure}
    \begin{center}
    \includegraphics[scale=.6]{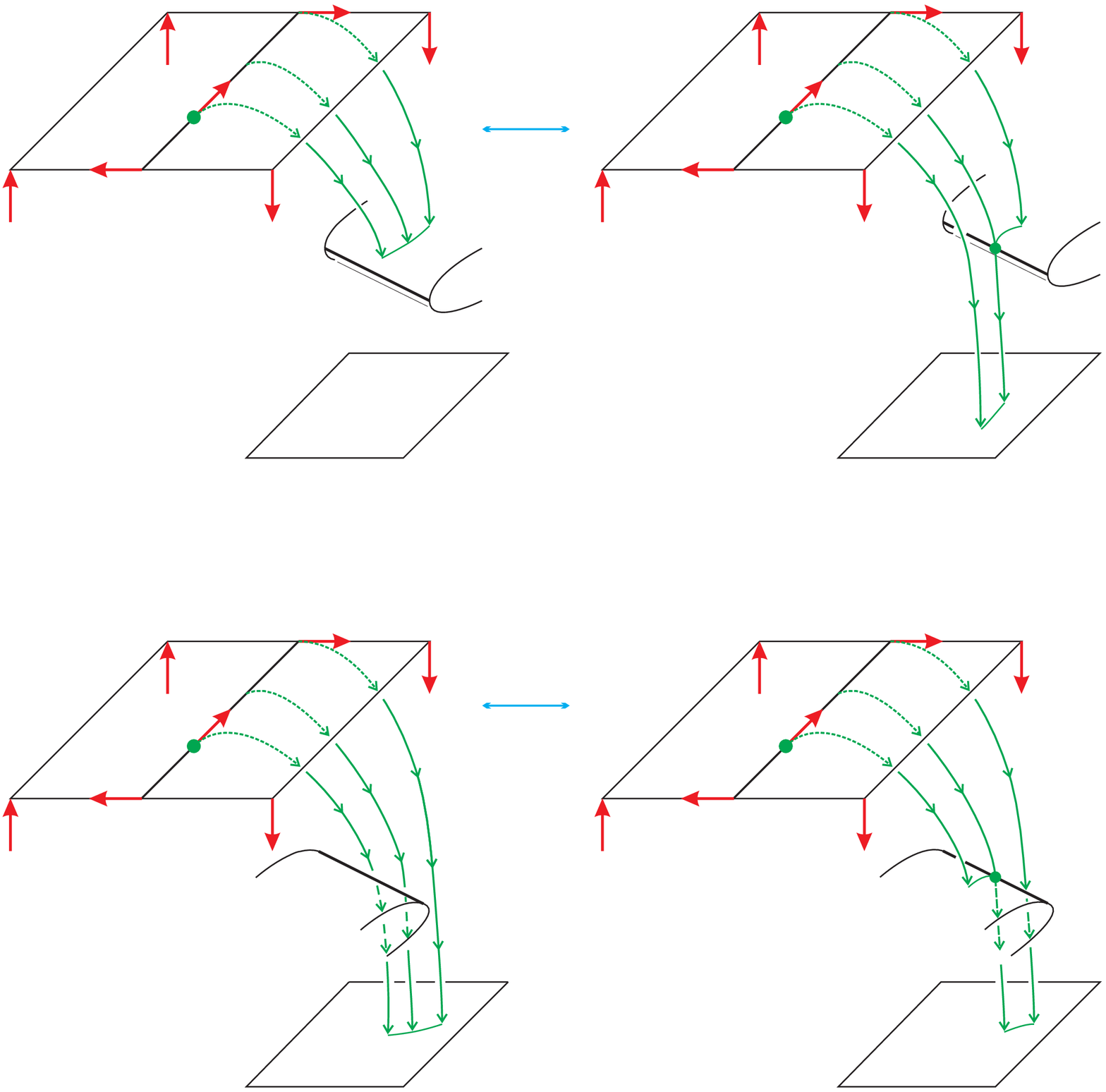}
    \end{center}
\mycap{Catastrophes corresponding to a transition orbit
being also once concavely tangent to the boundary, with an obvious transversality
condition.  These pictures refer to an incoming transition orbit,
but the analogue catastrophes involving outgoing transition orbits
must also be taken into account.\label{catastrophe/trans/in:fig}}
\end{figure}
\end{prop}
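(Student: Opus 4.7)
The plan is to follow the standard singularity-theoretic strategy for classifying codimension-one degenerations in a generic one-parameter family of vector fields, in the spirit of the analogous classification of~\cite{LNM} for closed combed manifolds. Let $v_0$ and $v_1$ be two stream-homotopic streams on $M$, and fix a smooth homotopy $\{v_t\}_{t\in[0,1]}$ through vector fields that are fixed on $\partial M$ and satisfy (G1) and (G2). Since the boundary configuration is constant, (G1) holds for every $t$ and the locus $\Gamma\subset\partial M$, with its partition into concave arcs, convex arcs and transition points, is the same for all $v_t$. The first step is to perturb $\{v_t\}$ relative to its endpoints and its restriction to $\partial M$ so as to become transverse, in the Thom--Boardman sense, to the finite list of conditions carved out by (G3) and (G4). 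After such a perturbation, (G3) and (G4) hold for every $t$ except at finitely many times $t_1<\cdots<t_k$, at each of which exactly one degeneration of the mildest codimension occurs.

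Next, I would enumerate the codimension-one failures. Condition (G4) can fail in two ways: (a) an orbit of $v_{t_i}$ is twice concavely tangent to $\partial M$ but the two tangency directions fail to be transverse under the parallel transport of Fig.~\ref{transverse:fig}; (b) an orbit of $v_{t_i}$ is thrice concavely tangent to $\partial M$, with the three tangency directions transverse to each other. These are exactly the situations depicted in Figg.~\ref{20/catastrophe:fig} and~\ref{32/catastrophe:fig}. Condition (G3) can fail when a transition orbit (either incoming or outgoing) acquires an additional concave tangency, transverse to the transition point, which is precisely Fig.~\ref{catastrophe/trans/in:fig}. A direct dimension count shows that any further degeneration ---a non-transverse triple tangency, a quadruple tangency, a non-transverse concave tangency lying on a transition orbit, or two independent catastrophes at the same time--- is of codimension at least two, and can be eliminated by a further small perturbation of $\{v_t\}$.

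The third step is to interpret these facts combinatorially. On each open interval $(t_{i-1},t_i)$ the family $\{v_t\}$ is a smooth path of honest streams whose in-backward polyhedra vary smoothly with $t$; by openness of the conditions (G1)--(G4) the isomorphism type of the associated stream-spine is constant on the interval, so by Theorem~\ref{stream:homeo:thm} the pairs $(M,v_t)$ and $(M,v_{t'})$ differ only by an ambient isotopy of $M$ fixed on $\partial M$. At each catastrophe time $t_i$ the construction of the in-backward polyhedron of Proposition~\ref{stream:to:spine:prop}, applied locally around the degenerate orbit, produces the before/after polyhedral pictures shown in the corresponding figure, whereas all other regions, triple lines, single lines, vertices and spikes of the spine are unaffected.

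The main obstacle, in my view, is twofold: first, justifying cleanly that a homotopy constrained to be fixed on $\partial M$ and to satisfy (G2) throughout can indeed be perturbed to become generic with respect to (G3) and (G4), since the standard transversality theorems must be applied to jets of vector fields along $\Gamma$ and along pairs or triples of points of $\Gamma$ linked by orbits, rather than in the ambient manifold; and second, the explicit local verification of the trans/in catastrophe, which is the subtlest case because, as Fig.~\ref{trans/spike:fig} shows, the out-forward description of a spike already involves orbits that return close to the transition point, so tracking the polyhedral change on both sides of $t_i$ requires the careful analysis of a neighbourhood of the transition orbit together with the nearby concave tangency curve.
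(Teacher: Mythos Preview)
Your overall singularity-theoretic framework is the right one, and the enumeration of failures of (G4) is correct. However, there is a genuine gap in your list of codimension-one degenerations of (G3). You only allow a transition orbit to acquire an extra \emph{concave} tangency, but (G3) also forbids a transition orbit from being tangent to $\partial M$ at a second \emph{transition} point, i.e.\ from being a \emph{doubly transition orbit}. Since the boundary configuration is fixed, the transition points are isolated and immobile; the backward orbit from a concave-to-convex transition point $A$ ends at a point of $\partial M$ that moves along a curve as $t$ varies, and asking this curve to pass through a fixed convex-to-concave transition point $B$ is a codimension-one condition. So this case cannot be dismissed by your dimension count, and it is not among the catastrophes of Figg.~\ref{20/catastrophe:fig}--\ref{catastrophe/trans/in:fig}.

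The paper's proof is in fact devoted almost entirely to this missing case. It accepts that a generic perturbation still produces finitely many doubly transition times, and then shows by an explicit local rerouting argument (analyzing the initial portions of orbits near an incoming transition orbit, Fig.~\ref{near/transition/in:fig}) that each such event can be replaced, via a further modification of the homotopy, either by nothing or by a single catastrophe of the type in Fig.~\ref{catastrophe/trans/in:fig}. That argument is the actual content of the proposition, and it is absent from your proposal. As a minor point, your ``third step'' about stream-spines does not belong here: the translation of the catastrophes into sliding moves is the subject of the subsequent theorem, not of this proposition.
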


\begin{proof}
It is evident that taking a generic perturbation of a homotopy one only gets
the elementary catastrophes of the statement, plus perhaps finitely many times at which
an orbit starts and ends at transition points. We then only need to show that this type of catastrophe
can be generically avoided during a homotopy.
To do so we carefully analyze in Fig.~\ref{near/transition/in:fig}
\begin{figure}
    \begin{center}
    \includegraphics[scale=.6]{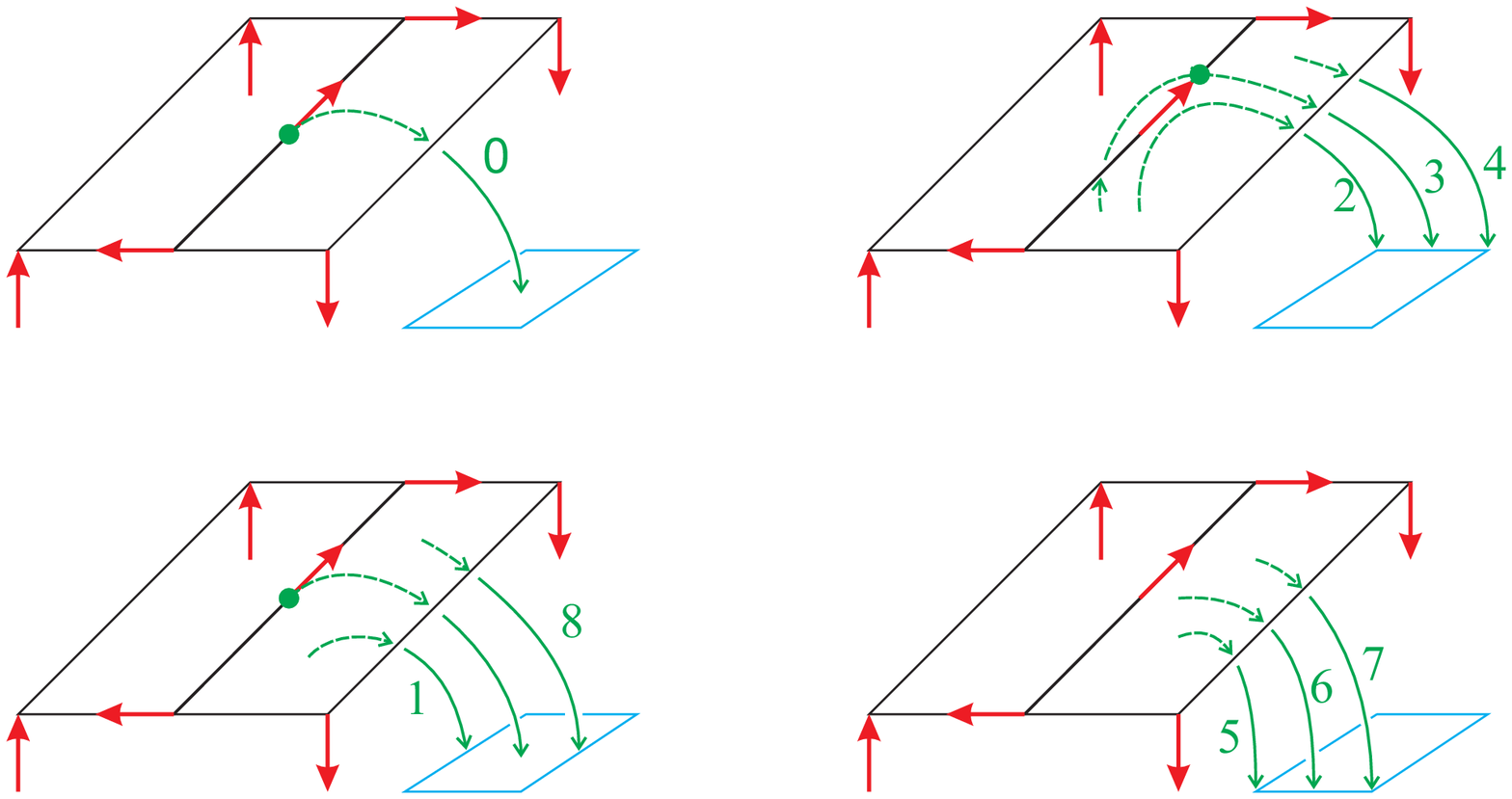}
    \end{center}
\mycap{Initial portions of orbits near an incoming transition orbit.\label{near/transition/in:fig}}
\end{figure}
the initial portions of the orbits close to an incoming transition orbit. In the type of catastrophe we want to avoid
we would have a concave-to-convex transition point $A$ such that the orbit through $A$
traces backward to, say, orbit $1$ just before the catastrophe, to orbit $0$ at the
catastrophe, and to orbit $8$ just after the catastrophe, with numbers as in
Fig.~\ref{near/transition/in:fig}. We can now modify the homotopy so that the orbit through $A$
traces back to either
\begin{itemize}
\item orbit 1, then 2, then 3, then 4, then 8, or
\item orbit 1, then 5, then 6, then 7, then 8.
\end{itemize}
Note that at $A$ with the first choice we obviously create a catastrophe as in
Fig.~\ref{catastrophe/trans/in:fig}, but for an outgoing transition orbit, while with the second
choice we do not create any catastrophe at $A$. On the other hand
at the starting point of orbit $0$ in Fig.~\ref{near/transition/in:fig}
we could create a
catastrophe as in Fig.~\ref{catastrophe/trans/in:fig} with one of the two choices and no
catastrophe with the other choice, but we cannot predict which is which. This shows that we can
always get rid of a doubly transition orbit
either at no cost or by inserting one catastrophe as in Fig.~\ref{catastrophe/trans/in:fig}.
\end{proof}

\subsection{Sliding moves on stream-spines}

In this subsection we introduce certain combinatorial moves on stream-spines. We do so
showing pictures and always meaning that the mirror images in $3$-space of the moves that we represent are
also allowed and named in the same way. Here comes the list; we call:
\begin{itemize}
\item \emph{Sliding $0\leftrightarrow 2$ move} any move as in Fig.~\ref{slide/20:fig};
\item \emph{Sliding $2\leftrightarrow 3$ move} any move as in Fig.~\ref{slide/32:fig};
\item \emph{Spike-sliding move} any move as in Fig.~\ref{slide/spike:fig};
\item \emph{Sliding move} any move of the types just described.
\end{itemize}
\begin{figure}
    \begin{center}
    \includegraphics[scale=.7]{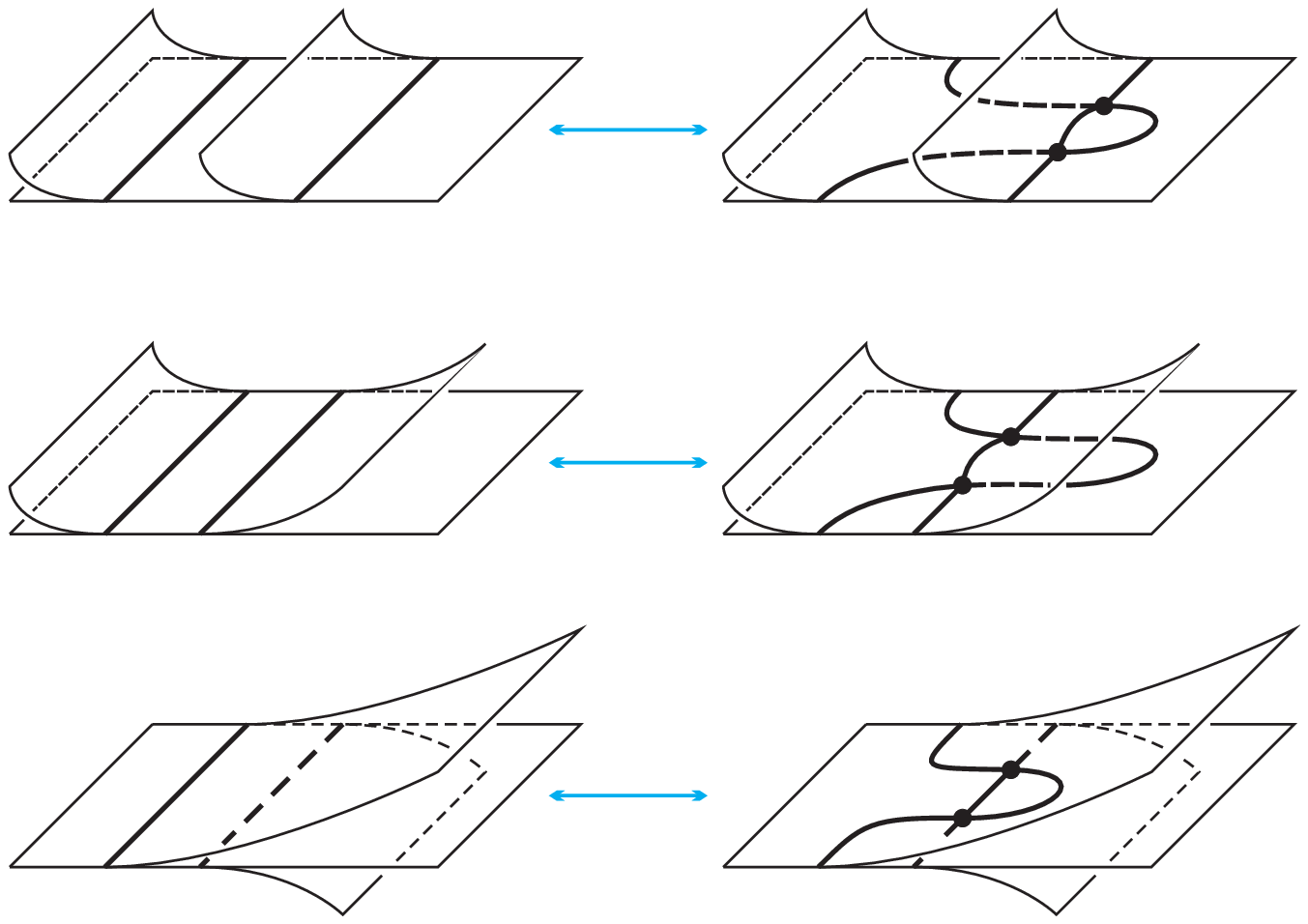}
    \end{center}
\mycap{The $0\leftrightarrow2$ sliding moves.\label{slide/20:fig}}
\end{figure}
\begin{figure}
    \begin{center}
    \includegraphics[scale=.7]{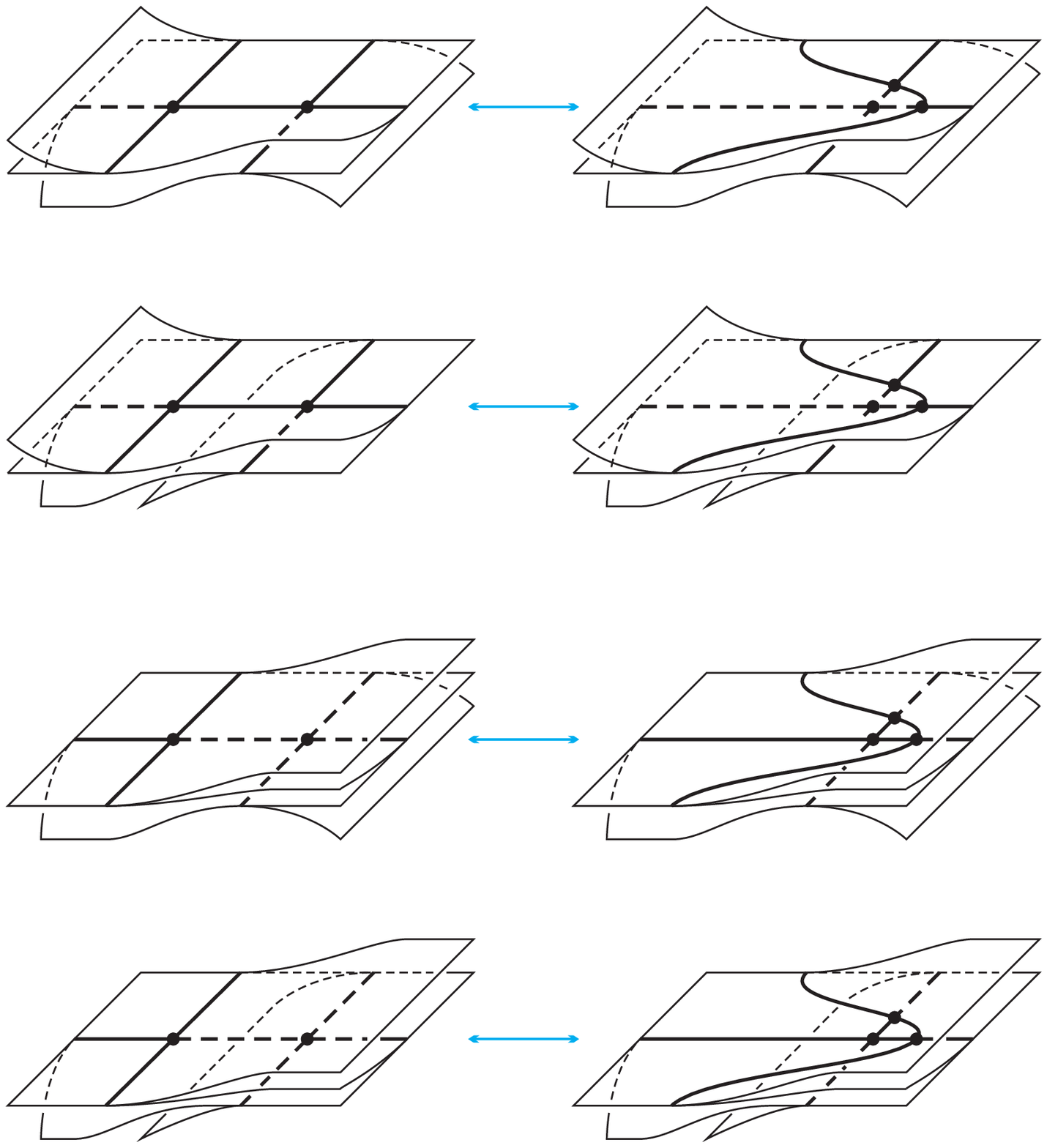}
    \end{center}
\mycap{The $2\leftrightarrow3$ sliding moves.\label{slide/32:fig}}
\end{figure}
\begin{figure}
    \begin{center}
    \includegraphics[scale=.7]{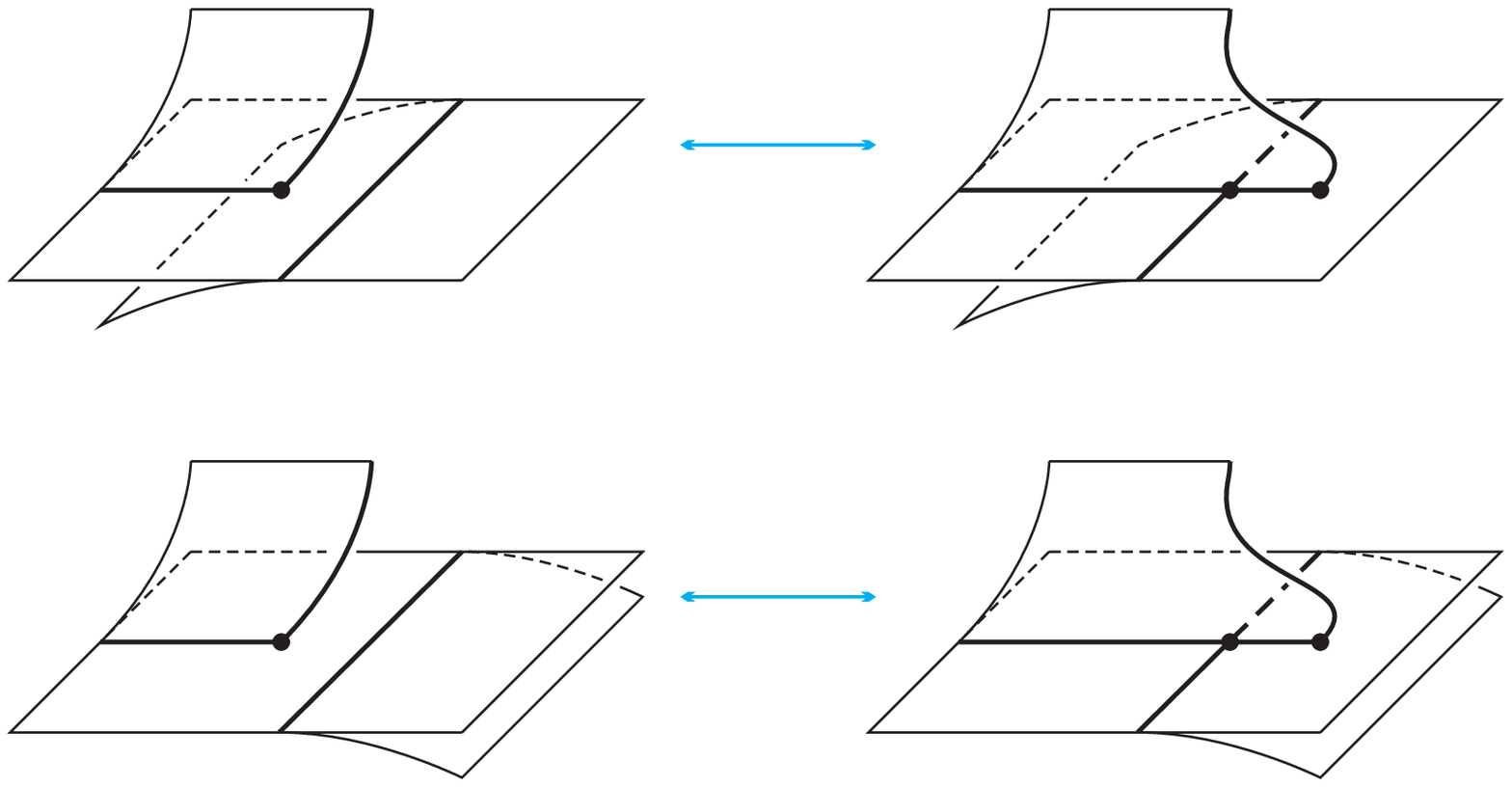}
    \end{center}
\mycap{The spike-sliding moves.\label{slide/spike:fig}}
\end{figure}

The following result is evident:

\begin{prop}\label{sliding:ok:prop}
If two stream-spines $P_1$ and $P_2$ in $\calS_0$ are related by a sliding move then the corresponding
streams $\varphi_0^*(P_1)$ and $\varphi_0^*(P_2)$ are stream-homotopic to each other.
\end{prop}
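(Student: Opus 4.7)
The plan is to verify the statement move-by-move, using the fact that each of the three families of sliding moves has been designed as the direct combinatorial translation of one of the elementary catastrophes listed in Proposition~\ref{catastrophes:prop}. More precisely, I would first argue that each sliding move is local: outside a small $3$-ball in the ambient manifold produced by $\varphi_0^*$, the stream-spines $P_1$ and $P_2$ (and hence, by Proposition~\ref{spine:to:stream:prop}, the manifolds and the streams they carry) agree. Therefore the equivalence of $\varphi_0^*(P_1)$ and $\varphi_0^*(P_2)$ under stream-homotopy reduces to producing a compactly supported homotopy inside this ball that realises the change, and then extending it by the identity.

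Next I would treat the three cases separately. For a sliding $0\leftrightarrow2$ move I would take the two local stream-spine pictures of Fig.~\ref{slide/20:fig}, apply to each the recipe of Proposition~\ref{spine:to:stream:prop} (thicken the regions as trivial $I$-bundles and fill in the blocks of Fig.~\ref{thicken:fig} along the triple lines), and compare the resulting boundary configurations of $v$ with the two sides of the catastrophe in Fig.~\ref{20/catastrophe:fig}: the concave tangency line produced in each block is exactly the one visible in the catastrophe, and the before/after patterns match. Exactly the same verification handles the sliding $2\leftrightarrow3$ moves against Fig.~\ref{32/catastrophe:fig}. For the spike-sliding moves I would use instead the local model of Fig.~\ref{vert/spike/thick:fig}-right: a spike thickens to a transition orbit, and Fig.~\ref{slide/spike:fig} together with the analysis already carried out in the proof of Proposition~\ref{stream:to:spine:prop} (in particular Fig.~\ref{trans/spike:fig}) shows that the two sides of the move correspond to the two sides of the catastrophe in Fig.~\ref{catastrophe/trans/in:fig}, in both the incoming and outgoing variants.

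In each case, once the two local streams have been identified with the two sides of the corresponding elementary catastrophe of Proposition~\ref{catastrophes:prop}, a one-parameter family of vector fields realising the catastrophe provides the required local stream-homotopy from $\varphi_0^*(P_1)$ to $\varphi_0^*(P_2)$. The screw-orientations at triple lines and the orientations of the regions, required by conditions (S2) and (S3), guarantee that the thickening recipe of Proposition~\ref{spine:to:stream:prop} matches consistently along the boundary of the ball in which the move takes place, so the local homotopy extends by the identity to a global stream-homotopy.

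The only real point to check, and hence the main (though still mild) obstacle, is the bookkeeping in the spike-sliding case: because a single spike in $P$ can correspond either to the backward or to the forward end of a transition orbit, one must verify that the orientations of the regions incident to the spike and the screw-orientations along the triple line meeting it single out the correct type (concave-to-convex versus convex-to-concave) of transition catastrophe, and that the mirror images admitted in the statement of the moves are exactly those admitted in the statement of the catastrophes. Once this compatibility is recorded, Proposition~\ref{sliding:ok:prop} follows.
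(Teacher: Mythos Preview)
Your proposal is correct and is exactly the reasoning that underlies the paper's treatment; the paper in fact gives no proof at all, simply declaring the result ``evident'' before the statement. You have spelled out in detail what that evidence consists of --- namely, that each sliding move is by design the combinatorial shadow of one of the elementary catastrophes of Proposition~\ref{catastrophes:prop}, and that realising the catastrophe provides the required local stream-homotopy --- which is precisely the intended justification.
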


\subsection{Translating catastrophes into moves}

In this subsection we establish the following:

\begin{thm}
Let $\varphi_0:\calS_0\to\calF_0$ be the surjection from the set of
stream-spines to the set of streams on $3$-manifolds up to homotopy.
Then $\varphi_0(P_1)$ and $\varphi_0(P_2)$ coincide in $\calF_0$
if and only if $P_1$ and $P_2$ are related by sliding moves.
\end{thm}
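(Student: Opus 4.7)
The plan is to prove the two directions separately. One direction is essentially already established: by Proposition~\ref{sliding:ok:prop}, each sliding move translates into a stream-homotopy between the associated streams, so if $P_1$ and $P_2$ are sliding-equivalent then $\varphi_0(P_1)=\varphi_0(P_2)$ in $\calF_0$. All the real work goes into the converse.

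For the converse, I would identify $P_i$ with the in-backward polyhedron of $(M_i,v_i) = \varphi_0^*(P_i)$ via Theorem~\ref{stream:homeo:thm}, and then use Proposition~\ref{catastrophes:prop} to decompose any stream-homotopy between $(M_1,v_1)$ and $(M_2,v_2)$ into a finite sequence of stages, each of which is either an ambient isotopy of $v$ (relative to $\partial M$) or one of the elementary catastrophes of Figg.~\ref{20/catastrophe:fig} to~\ref{catastrophe/trans/in:fig}. Along an isotopy stage the in-backward polyhedron varies by an ambient isotopy of $M$, hence its class in $\calS_0$ stays constant. Thus the converse reduces to a purely local claim: each elementary catastrophe modifies the in-backward polyhedron by exactly one of the sliding moves of Figg.~\ref{slide/20:fig}--\ref{slide/spike:fig}.

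To establish this local claim I would inspect each catastrophe using the cross-sectional technique that already appears in the proofs of Propositions~\ref{spine:to:stream:prop} and~\ref{stream:to:spine:prop}. A catastrophe as in Fig.~\ref{20/catastrophe:fig} turns two transversely tangent concave orbit-ends into a non-transverse configuration, and one expects the in-backward polyhedron to gain or lose a pair of vertices joined by a short arc of triple line, matching the $0\leftrightarrow 2$ sliding move of Fig.~\ref{slide/20:fig}. A catastrophe as in Fig.~\ref{32/catastrophe:fig} produces a triply concave-tangent orbit through which the three local triple lines reshuffle exactly as in the $2\leftrightarrow 3$ sliding move of Fig.~\ref{slide/32:fig}. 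Finally, a catastrophe as in Fig.~\ref{catastrophe/trans/in:fig}, in which a transition orbit acquires an extra concave tangency, should make a spike cross a triple line, yielding a spike-sliding move from Fig.~\ref{slide/spike:fig}. In every case the region orientations and the screw-orientations are forced by the requirement that $v$ be positively transversal to the polyhedron, so the verification amounts to checking that these data agree with those built into the pictures defining the sliding moves.

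I expect the $0\leftrightarrow 2$ and $2\leftrightarrow 3$ cases to be routine, essentially parallel to the analogous analysis carried out for closed combed manifolds in~\cite{LNM}. The main obstacle will be the transition-catastrophe case: a concave-to-convex transition point simultaneously produces a spike and sits at the end of a triple line, as in Fig.~\ref{trans/spike:fig}, so as a further concave tangency arc slides past it one must distinguish incoming from outgoing transition orbits, keep track of the various mirror images in $3$-space, and monitor the orientation data along the neighbouring triple line. I plan to enumerate these sub-cases one by one, verifying in each of them that the local change is indeed one of the spike-sliding moves of Fig.~\ref{slide/spike:fig}, possibly after inserting a preliminary $0\leftrightarrow 2$ or $2\leftrightarrow 3$ move to bring the configuration into a standard form.
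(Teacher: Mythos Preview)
Your proposal is correct and follows essentially the same strategy as the paper: invoke Proposition~\ref{sliding:ok:prop} for one direction, and for the converse reduce via Proposition~\ref{catastrophes:prop} to checking that each elementary catastrophe induces a sliding move on the associated stream-spine, with the $0\leftrightarrow2$ and $2\leftrightarrow3$ cases deferred to~\cite{LNM}. The one tactical difference worth noting is that for the transition catastrophes of Fig.~\ref{catastrophe/trans/in:fig} the paper first switches to the \emph{out-forward} spine rather than the in-backward one: since an incoming transition orbit is followed forward, the spike visibly crosses the triple line and the spike-sliding move is read off directly, with no auxiliary $0\leftrightarrow2$ or $2\leftrightarrow3$ moves needed. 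The paper then redoes the analysis on the in-backward spine only as an illustration, confirming exactly the subtlety you anticipate; so your plan works, but passing to the out-forward spine would spare you the sub-case enumeration.
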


\begin{proof}
We must show that the elementary catastrophes along a generic stream-homotopy, as described in
Proposition~\ref{catastrophes:prop}, correspond at the level of stream-spines to the sliding moves.
Checking that the catastrophes of Fig.~\ref{20/catastrophe:fig} and~\ref{32/catastrophe:fig}
correspond to the $0\leftrightarrow2$ and $2\leftrightarrow3$ sliding moves is easy and
already described in~\cite{LNM}, so we do not reproduce the argument.

We then concentrate on the catastrophes of Fig.~\ref{catastrophe/trans/in:fig}, showing that on the associated out-forward spines
their effect is that of a spike-sliding. This is done in Fig.~\ref{slide/trans/out:fig}
\begin{figure}
    \begin{center}
    \includegraphics[scale=.6]{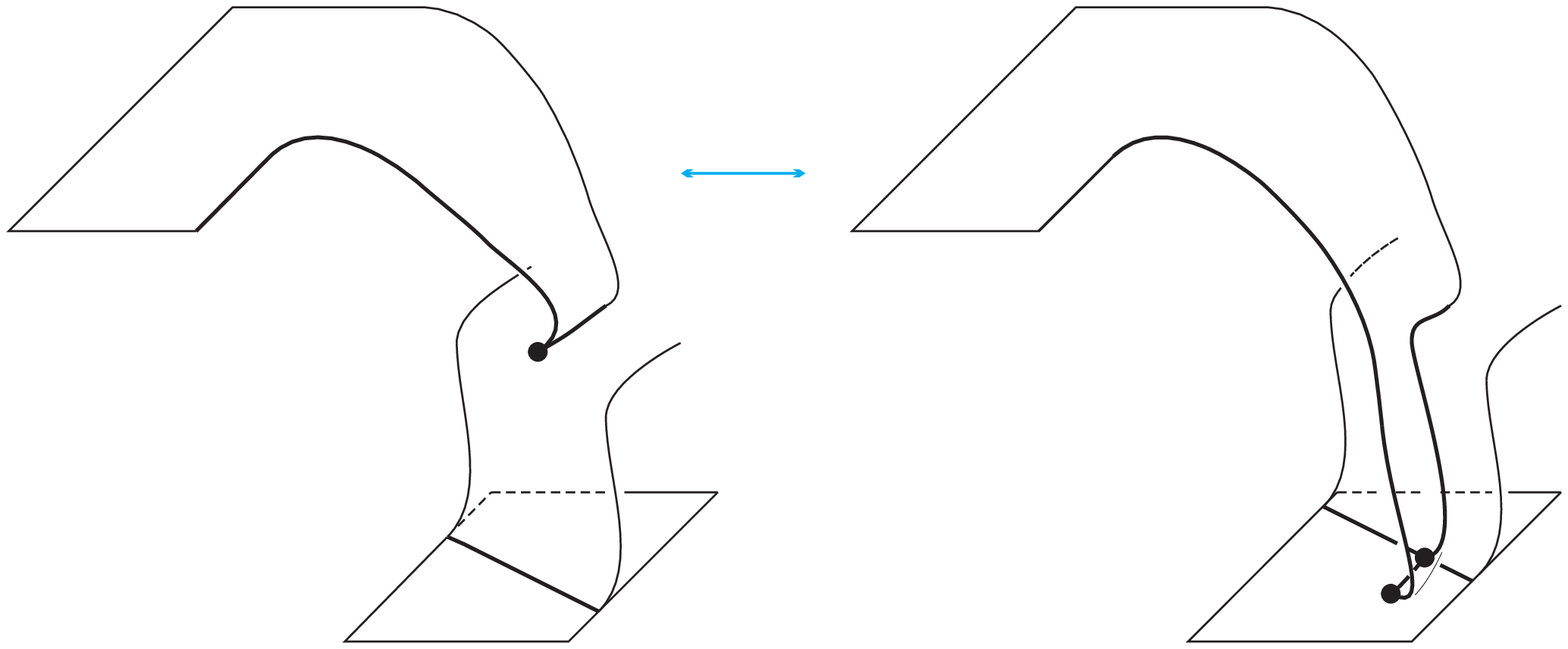}
    \end{center}
\mycap{From a catastrophe involving concave tangency of an incoming transition orbit
to a spike-sliding in the associated out-forward stream-spine.\label{slide/trans/out:fig}}
\end{figure}
for the catastrophe in the top portion of Fig.~\ref{catastrophe/trans/in:fig}, 
which is then easily recognized to give the first spike-sliding
move of Fig.~\ref{slide/spike:fig}; a very similar picture shows that the bottom portion of
Fig.~\ref{catastrophe/trans/in:fig} gives the second spike-sliding move of
Fig.~\ref{slide/spike:fig}.

The proof is now complete and the isomorphism of the in-backward and out-forward stream-spines implies 
that the effect of the catastrophes of Fig.~\ref{catastrophe/trans/in:fig}
is that of a spike-sliding also on the in-backward stream-spine. It is however instructive to
analyze the effect directly on the in-backward stream-spine ---in fact, it is not
even obvious at first sight that the catastrophes of Fig.~\ref{catastrophe/trans/in:fig} have
any impact on the in-backward stream-spine, given that there is no transition orbit to follow backward anyway.
But the catastrophes of Fig.~\ref{catastrophe/trans/in:fig} do have an impact on the
in-backward stream-spine, because at the catastrophe time there is an orbit that from a concave
tangency point traces back to a transition point. To analyze what the impact exactly is,
we restrict to the top portion of Fig.~\ref{catastrophe/trans/in:fig} and we employ
Fig.~\ref{near/transition/in:fig} in a crucial fashion. We do this in
Fig.~\ref{slide/trans/in:fig},
\begin{figure}
    \begin{center}
    \includegraphics[scale=.75]{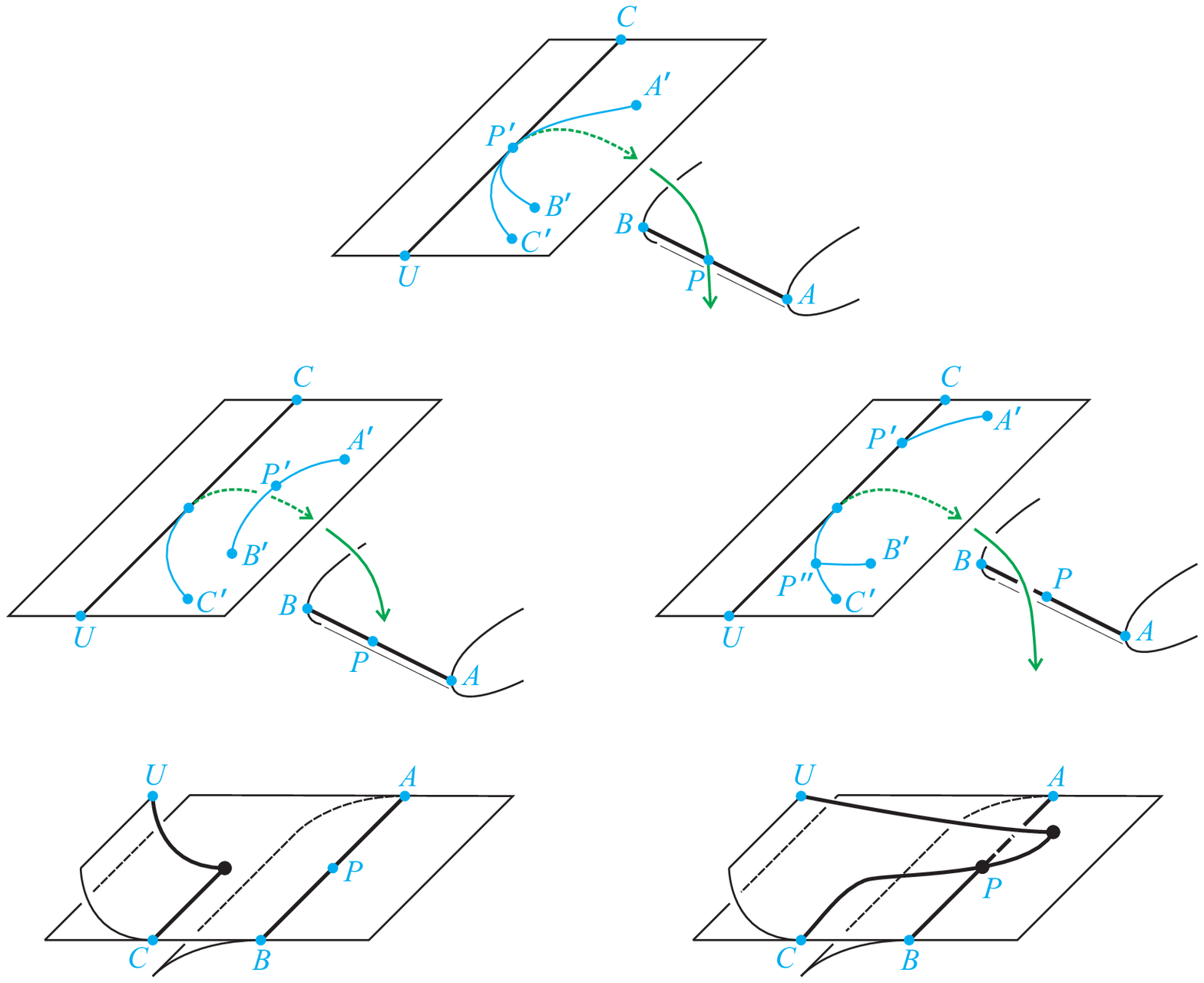}
    \end{center}
\mycap{From a catastrophe involving concave tangency of an incoming transition orbit
to a spike-sliding in the associated in-backward stream-spine.\label{slide/trans/in:fig}}
\end{figure}
where we show the exact time of the catastrophe (top),
the situation before (middle-left) and after (middle-right) the catastrophe, and
the corresponding in-backward stream-spines (bottom). In the middle figures we show how the concave tangency lines trace
back to the in-region, showing for some points $Q$ the boundary point $Q'$ obtained by following backward the orbit through $Q$; note that
after the catastrophe one point $P$ traces back first to a point $P'$ of the concave tangency line and then to a point $P''$ of
the in-region. Using the information of the middle figures one indeed sees that the corresponding
stream-spines are as in the bottom figures, where one recognizes the first spike-sliding
of Fig.~\ref{slide/spike:fig}.
\end{proof}

\section{Combinatorial presentation of generic flows}

As already anticipated, let us now define $\calF$ as the set of pairs $(M,v)$ where $M$ is a
compact, connected, oriented $3$-manifold (possibly without boundary) and $v$ is a generic flow on $M$,
with $M$ viewed up to diffeomorphism and $v$ viewed up to homotopy on $M$ fixed on $\partial M$.
To provide a combinatorial presentation of $\calF$ we call:
\begin{itemize}
\item \emph{Trivial sphere} on the boundary of some $(N,w)$ one that is split into one in-disc and one out-disc
by one concave tangency circle;
\item \emph{Trivial ball} a ball $(B^3,u)$ with $u$ a stream on $B^3$ and $\partial B^3$
split into one in-disc and one out-disc
by one convex tangency circle.
\end{itemize}
Note that a trivial ball can be glued to a trivial sphere matching the vector fields.
We now define $\calS$ as the subset of $\calS_0$ consisting of stream-spines $P$ such that the boundary of
$\varphi_0(P)$ contains at least one trivial sphere. We will establish the following:

\begin{thm}\label{flow:thm}
For $P\in\calS$ let $\varphi(P)$ be obtained from $\varphi_0(P)$ by
attaching a trivial ball to a trivial sphere in the boundary of $\varphi_0(P)$.
This gives a well-defined surjective map $\varphi:\calS\to\calF$, and
$\varphi(P_0)=\varphi(P_1)$ if an only if $P_0$ and $P_1$ are obtained
from each other by the sliding moves of Figg.~\ref{slide/20:fig} to~\ref{slide/spike:fig}.
\end{thm}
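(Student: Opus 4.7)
The plan is to break the proof into four parts: well-definedness of $\varphi$, surjectivity, the implication $P_0 \sim P_1 \Rightarrow \varphi(P_0) = \varphi(P_1)$, and its converse.

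For well-definedness, I would show that when $\varphi_0(P)$ has more than one trivial sphere in its boundary, different choices of filling sphere yield the same element of $\calF$. The key point is that a trivial sphere filled by a trivial ball creates a small $S^2\times I$ region in $\Int(M)$ on which $v$ is a trivial product flow; such a region can be slid along $(M,v)$ by an ambient isotopy covered by a homotopy of $v$ fixed on the remaining boundary, so the filled ball may be moved anywhere, and in particular can be made to coincide with any alternative filling.

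Surjectivity is the substantive part. Given $(M, v) \in \calF$, I want to produce a stream-spine $P \in \calS$ with $\varphi(P) = (M, v)$. The plan is: first, by a local homotopy of $v$ supported in a small ball $B' \subset \Int(M)$ and fixed on $\partial M$, create a trivial ball $B \subset B'$, which amounts to introducing a single convex tangency circle. Then further homotope $v$, still fixed on $\partial M$, so that $v|_{M \setminus \Int(B)}$ becomes a stream in the sense of (G1)--(G4). Once $(M \setminus \Int(B), v|_{M \setminus \Int(B)})$ defines an element of $\calF_0$, the bijection of Theorem \ref{stream:homeo:thm} (combined with the main theorem of Section 2) provides $P \in \calS_0$ representing it; since $\partial B$ is a trivial sphere in the boundary of the stream $\varphi_0(P)$, in fact $P \in \calS$, and $\varphi(P) = (M, v)$ by construction.

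The implication $P_0 \sim P_1 \Rightarrow \varphi(P_0) = \varphi(P_1)$ follows from Proposition \ref{sliding:ok:prop} together with well-definedness: a stream-homotopy between $\varphi_0(P_0)$ and $\varphi_0(P_1)$ extends across the trivial-ball filling to a homotopy between $\varphi(P_0)$ and $\varphi(P_1)$ fixed on the remaining boundary. For the converse, suppose $\varphi(P_0) = \varphi(P_1) = (M, v)$, so that $(M, v) = (N_i, w_i) \cup_{S_i} B_i$ for $i=0,1$. I would connect $B_0$ and $B_1$ by an ambient isotopy of $M$ covered by a homotopy of $v$ fixed on $\partial M$, reducing to a common filling ball; then $w_0$ and $w_1$ are stream-homotopic on the common complement $N$, and the main theorem of Section 2 produces a sequence of sliding moves carrying $P_0$ to $P_1$.

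The main obstacle is the crux of surjectivity: homotoping $v$, fixed on $\partial M$, so that outside a single trivial ball $B$ every orbit is a closed arc or a point. Closed orbits of $v$, and orbits with non-trivial limit behavior, must all be rerouted through $B$. The natural tool is a Wilson-plug-style construction: after a generic $C^1$-perturbation the non-wandering set of $v$ decomposes into finitely many closed orbits plus trivial components, and each such orbit can be cut open through $B$ by a homotopy of $v$ supported in a tube along an arc joining the orbit to $B$. A final generic perturbation then enforces conditions (G2)--(G4) on the complement, so that the cut-open flow is genuinely a stream.
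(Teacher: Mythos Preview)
Your four-part structure matches the paper's, and the well-definedness and forward implication are essentially as the paper handles them. But there are genuine gaps in both surjectivity and the converse, and in each case the missing ingredient is the same.

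For surjectivity, the assertion that ``after a generic $C^1$-perturbation the non-wandering set of $v$ decomposes into finitely many closed orbits plus trivial components'' is false for nowhere-zero fields on $3$-manifolds. Such fields can carry $C^1$-robust nontrivial recurrence: hyperbolic basic sets, infinitely many periodic orbits, solenoidal attractors, and so on. Kupka--Smale gives hyperbolicity of each periodic orbit, not finiteness of the set of them, and Wilson-type plugs break individual closed orbits one at a time without controlling what new recurrence the surgery creates. There is no finite plug procedure that turns an arbitrary generic flow into one with only arc orbits outside a single ball. The paper avoids this entirely by introducing a \emph{normal section} for $(M,v)$: a smooth disc $\Delta\subset\Int(M)$ transverse to $v$ such that every orbit meets $\Delta\cup\partial M$ in bounded positive and negative time, with suitable genericity along $\partial\Delta$. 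Existence of normal sections is the key technical input, taken from~\cite{LNM,Ishii}. Cutting out a thin neighbourhood of $\Delta$ removes a trivial ball and makes the restricted flow a stream automatically, with no need to tame the dynamics of $v$.

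For the converse, after aligning the two filling balls you claim that $w_0$ and $w_1$ are stream-homotopic on the common complement $N$. This is exactly the point that requires proof: a homotopy $(v_t)$ on $M$ fixed on $\partial M$ need not restrict to a homotopy through streams on $N$, since intermediate $v_t$ may have closed or recurrent orbits inside $N$. The paper's argument again uses normal sections: one carries a normal section $\Sigma_t$ continuously along the homotopy, so that at each time the complement of the associated ball carries a genuine stream, and the only degenerations along the way are the elementary catastrophes of Proposition~\ref{catastrophes:prop}, hence sliding moves. A further trick (linking two disjoint normal sections for the same flow by a thin strip, then expanding and contracting) handles the discrepancy between the section $\Sigma_1$ one arrives at and the given section $\Delta_1$.

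In short, both gaps are closed by the normal-section machinery; without it, neither direction goes through.
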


\subsection{Equivalence of trivial balls}
In this subsection we will show that the map $\varphi$ of Theorem~\ref{flow:thm} is well-defined.
To this end choose $P\in\calS$ and set $(N,w)=\varphi_0(P)$. To define $\varphi(P)$ we
must choose one trivial sphere $S\subset\partial N$, a trivial ball $(B^3,u)$ and
a diffeomorphism $f:\partial B^3\to S$ matching $u$ to $w$. The manifold $M$
resulting from the gluing is of course independent of $S$, and the resulting
flow $v$ on $M$ is of course independent of $f$ up to homotopy. However, when
the boundary of $\varphi_0(P)$ contains more than one trivial sphere, it
is not obvious that the pair $(M,v)$ as an element of $\calF$ is independent of $S$.
This will be a consequence of the following:

\begin{prop}
Let $v$ be a generic flow on $M$, and let $B_1$ and $B_2$ be disjoint trivial balls contained in
the interior of $M$. Then there is a flow $v'$ on $M$
homotopic to $v$ relatively to $(\partial M)\cup B_1\cup B_2$
such that $\overline{M\setminus B_1}$ and $\overline{M\setminus B_2}$, endowed with the
restrictions of $v'$, are diffeomorphic to each other.
\end{prop}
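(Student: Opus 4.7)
The plan is to connect $B_1$ and $B_2$ by an embedded tube, modify $v$ on the tube so that the combined region becomes a single trivial ball in the modified flow, and then conclude by the essential uniqueness of the resulting punctured-trivial-ball cobordism.

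First I would invoke connectedness of $M$ to choose an embedded smooth arc $\gamma$ from a point $p_1\in\partial B_1$ to a point $p_2\in\partial B_2$, with interior lying in $\mathrm{int}(M)\setminus(B_1\cup B_2\cup\partial M)$ and in general position with respect to the special loci of $v$. The points $p_i$ can be chosen in prescribed open subregions of $\partial B_i$; I would thicken $\gamma$ to a tube $T\cong[0,1]\times D^2$, thin enough to be disjoint from $\partial M$ and to meet each $B_i$ only in the end disk $\{i\}\times D^2$.

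Next I would construct $v'$, equal to $v$ outside an open neighborhood $U$ of $T\setminus(B_1\cup B_2)$ contained in $\mathrm{int}(M)\setminus(\partial M\cup B_1^{\circ}\cup B_2^{\circ})$, and homotopic to $v$ through flows that are equal to $v$ on $(\partial M)\cup B_1\cup B_2$. The target is that $\tilde B:=B_1\cup T\cup B_2$ (after rounding the corners where $T$ meets $\partial B_i$) is itself a trivial ball of $(M,v')$: the two convex tangency circles on $\partial B_1,\partial B_2$ should connect through the lateral surface of $T$ into a single convex tangency circle on $\partial\tilde B$ splitting it into one in-disc and one out-disc. Existence of $v'$ and of the connecting homotopy is essentially a local question inside $T$: prescribe $v'$ on the end disks and on $\partial T$ as forced by the matching conditions, extend by interpolation, and perturb to restore the genericity axioms (G1)--(G4).

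With $\tilde B$ a trivial ball of $(M,v')$, the complement $N:=\overline{M\setminus\tilde B}$ has $\partial\tilde B$ as a trivial sphere. Both $\overline{M\setminus B_1}$ and $\overline{M\setminus B_2}$ decompose as $N$ glued along $\partial\tilde B$ to the punctured trivial ball $\overline{\tilde B\setminus B_i^{\circ}}$. A standard trivial ball with a sub-trivial ball in its interior is unique up to flow-preserving diffeomorphism fixing the outer trivial sphere, so the two punctured trivial balls are identified by such a diffeomorphism. Combining this with the identity on $N$ yields the desired flow-preserving diffeomorphism $\overline{M\setminus B_1}\cong\overline{M\setminus B_2}$.

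The hard step is the second one. A naive parallel flow inside $T$ would be tangent along the whole lateral cylinder of $T$, violating (G1), so one must interpolate carefully near this cylinder; and the surgery that fuses the two convex tangency circles of $\partial B_1,\partial B_2$ into a single circle on $\partial\tilde B$ may need to proceed through intermediate configurations controlled by, and realized via, the elementary catastrophes of Proposition~\ref{catastrophes:prop}. A secondary point is the uniqueness used in the third step, namely that any two standard inclusions of a trivial ball into a trivial ball are ambient isotopic through flow-preserving maps; this should follow from the explicit description of the trivial-ball model. Once these two points are settled, the first step is routine.
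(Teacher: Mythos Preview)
Your outline is workable and shares with the paper the basic move of connecting $B_1$ to $B_2$ by an arc and modifying $v$ near it, but the paper reaches the conclusion by a lighter route. Instead of engineering $v'$ so that the entire union $\tilde B=B_1\cup T\cup B_2$ becomes a trivial ball---which, as you correctly flag, forces you to merge the two convex tangency circles across the lateral cylinder of $T$ and then to invoke a separate uniqueness statement for nested trivial balls---the paper only arranges that $v'$ be \emph{tangent to the connecting arc} $\alpha$. This is achieved in one stroke: after a small perturbation one has $\dot\alpha(t)\neq -v(\alpha(t))$ for all $t$, so $v$ can be rotated to $\dot\alpha$ through nowhere-zero fields on a neighbourhood of $\alpha$, giving an intermediate field $v''$ with $v''(\alpha(t))=\dot\alpha(t)$. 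A further local homotopy in a neighbourhood of $B_1\cup\alpha\cup B_2$ (fixing $v$ on $B_1\cup B_2$, and exhibited in the paper by a single figure) then puts the configuration into an explicitly symmetric form from which the diffeomorphism $\overline{M\setminus B_1}\cong\overline{M\setminus B_2}$ is read off directly.

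So your argument trades one easy step (make $v'$ tangent to $\alpha$) for a genuinely harder one (make $\tilde B$ trivial) plus an auxiliary lemma. Your version is more modular and would be reusable if one needed the nested-trivial-ball uniqueness elsewhere; the paper's version is shorter and avoids both the delicate tube construction and the extra lemma. Two minor remarks on your write-up: the appeal to the catastrophes of Proposition~\ref{catastrophes:prop} is not really to the point here, since you are performing a homotopy of $v$ on $M$ rather than a stream-homotopy on a manifold with boundary; and conditions (G2)--(G4) need not be restored for $v'$ on $M$, since the statement only asks for a generic flow on $M$, not a stream.
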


\begin{proof}
Choose a smooth path $\alpha:[0,1]\to M$ with $\alpha(j)\in\partial B_j$ and
$\dot\alpha(j)=v(\alpha(j))$ not tangent to $\partial B_j$ for $j=0,1$, and
$\alpha(t)\not\in B_1\cup B_2$ for $0<t<1$. Up to small perturbation we can
assume $\dot\alpha(t)\neq -v(\alpha(t))$ for $t\in[0,1]$, and then homotope $v$
on a neighbourhood of $\alpha$ to a flow $v''$ such that
$v''(\alpha(t))=\dot\alpha(t)$ for $t\in[0,1]$. Now we can homotope $v''$ to $v'$
in a neighbourhood of $B_1\cup B_2\cup\alpha$ as suggested in Fig.~\ref{twoballs:fig},
\begin{figure}
    \begin{center}
    \includegraphics[scale=.6]{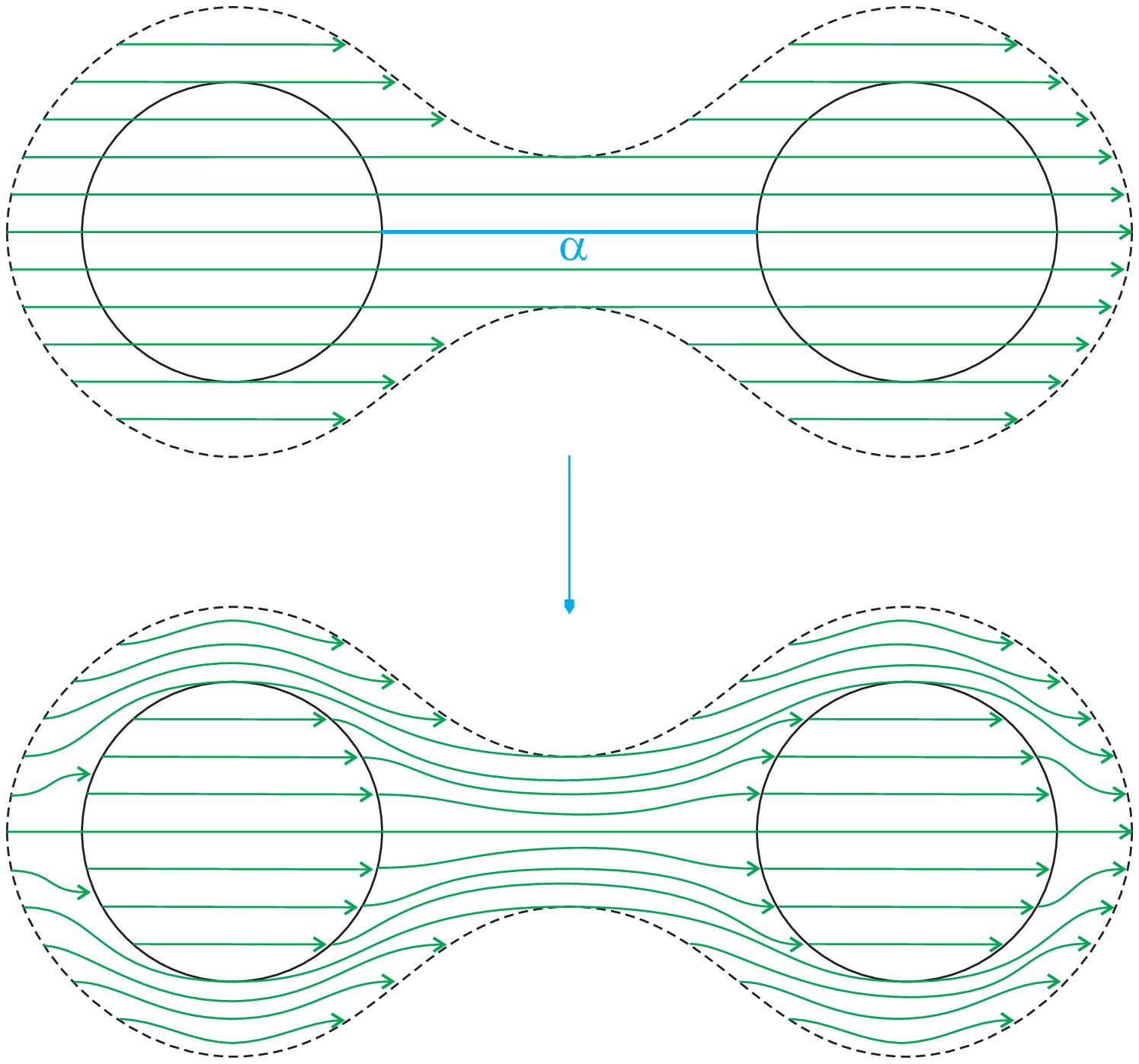}
    \end{center}
\mycap{Homotoping a field so that removing either of two trivial balls gives the same result.\label{twoballs:fig}}
\end{figure}
which gives the desired conclusion.
\end{proof}

\subsection{Normal sections}
Let us now show that the map $\varphi$ of Theorem~\ref{flow:thm} is surjective. To this end we adapt
a definition from~\cite{LNM, Ishii}, calling \emph{normal section} for a manifold $M$ with generic flow $v$
a smooth disc $\Delta$ in the interior of $M$ such that $v$ is transverse to $\Delta$, every orbit
of $v$ meets $\Delta\cup\partial M$ in both positive and negative time, and the orbits of $v$ tangent
to $\partial M$ or intersecting $\partial\Delta$ are generic with respect to each other, with the obvious meaning.
The existence of normal sections is rather easily established~\cite{LNM}, and Fig.~\ref{cutsection:fig}
\begin{figure}
    \begin{center}
    \includegraphics[scale=.6]{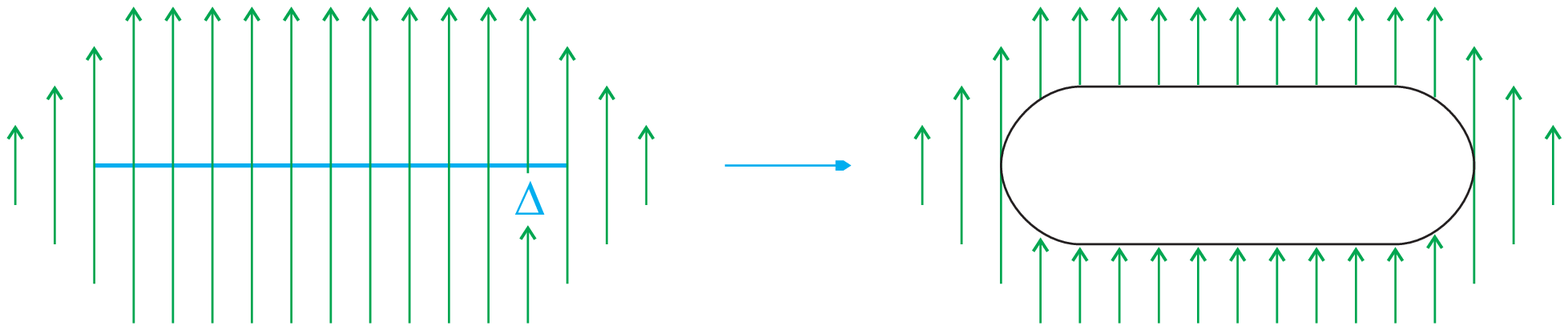}
    \end{center}
\mycap{From a normal section to a stream on the complement of a trivial ball.\label{cutsection:fig}}
\end{figure}
suggests how, given a normal section $\Delta$ of $(M,v)$,
to remove a trivial ball $B$ from $(M,v)$ so that the restriction $w$ of $v$ to $N=M\setminus B$ is a stream on $N$.
By construction if $P$ is a stream-spine such that $\varphi_0^*(P)=(N,w)$ we have that $\varphi(P)$ represents
$(M,v)$, whence the surjectivity of $\varphi$. Let us also note, since we will need this to prove injectivity,
that $P$ can be directly recovered from $(M,v)$ and $\Delta$, taking $\Delta$ union the in-region of $\partial M$ union
the set of points $A$ such that there exists an orbit of $v$ going from $A$ to 
$\partial \Delta$ or to the concave tangency line of $v$ to $\partial M$,
with the obvious branching along triple lines.

\subsection{Homotopy}
We are left to establish injectivity of the
map $\varphi$ of Theorem~\ref{flow:thm}. Recalling that the elements $(M,v)$ of $\calF$ are regarded
up to diffeomorphism of $M$ and homotopy of $v$ on $M$ relative to $\partial M$,
we see that injectivity is a consequence of
the following:

\begin{prop}\label{inj:prop}
Let $(v_t)_{t\in[0,1]}$ be a homotopy of generic flows on $M$, fixed on $\partial M$.
For $j=0,1$ let $\Delta_j$ be a normal section for $(M,v_j)$ and let $P_j$ be the stream-spine
defined by $\Delta_j$ and $v_j$ as at the end of the previous subsection. Then
$P_0$ and $P_1$ are related by the sliding moves of
Figg.~\ref{slide/20:fig} to~\ref{slide/spike:fig}.
\end{prop}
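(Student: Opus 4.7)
The plan is to reduce the proposition to the stream-homotopy result of Section~2. First I would promote the given homotopy $(v_t)_{t\in[0,1]}$ to a 1-parameter family of triples $(v_t,\Delta_t,B_t)$, where $\Delta_t$ is a normal section for $(M,v_t)$ and $B_t$ is the associated trivial ball as in Figure~\ref{cutsection:fig}, smoothly interpolating between the given data at the endpoints. Existence of such a family is essentially a parametric version of the normal-section existence result cited from~\cite{LNM,Ishii}: one works in the jet space of pairs (flow, embedded disc) and uses transversality. Setting $N_t=\overline{M\setminus B_t}$ and $w_t=v_t|_{N_t}$, one obtains for each $t$ a stream on $N_t$ and, by the recipe described at the end of the previous subsection, a stream-spine $P_t$ which agrees with $P_j$ at $t=j$.

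After a further generic perturbation of the family $(v_t,\Delta_t)$, the spine $P_t$ is constant up to isotopy except at finitely many times $t_1<\cdots<t_k$, which split into two classes: (a) catastrophes of $(v_t)$ of the types classified in Proposition~\ref{catastrophes:prop}, localized at points of $N_t$ away from $\Delta_t$ and $\partial B_t$; and (b) transition times at which $\Delta_t$ itself bifurcates relative to an essentially stationary flow. For times of class (a), the main theorem of Section~2 applied to the pairs $(N_{t^-},w_{t^-})$ and $(N_{t^+},w_{t^+})$ shows directly that $P_{t^-}$ and $P_{t^+}$ are related by one of the sliding moves of Figures~\ref{slide/20:fig} to~\ref{slide/spike:fig}.

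The main obstacle is class (b), which amounts to independence of the stream-spine from the choice of normal section for a fixed generic flow, up to sliding moves. Generically the local models here are few: the disc $\Delta_t$ develops a tangency with $v_t$, the boundary $\partial\Delta_t$ crosses a concave tangency line of $v_t$ on $\partial M$, an orbit through $\partial\Delta_t$ becomes tangent to such a concave line, or two such orbits collide. For each model one has to verify, by the same sort of direct picture analysis used in the proofs of Propositions~\ref{spine:to:stream:prop} and~\ref{stream:to:spine:prop}, that the induced local change of $P_t$ is realized by a sequence of $0\leftrightarrow2$, $2\leftrightarrow3$ or spike-sliding moves. A useful simplification is to reinterpret each bifurcation of $\Delta_t$ as an ordinary stream-homotopy: using the ball-sliding argument of Section~3.1, one can slide $B_{t^-}$ to $B_{t^+}$ along a smooth arc in $(M,v_t)$, which realizes the change of normal section by a compactly supported stream-homotopy of $w_t$ and thereby reduces class (b) to class (a). Concatenating the sliding moves produced at $t_1,\dots,t_k$ then yields the required relation between $P_0$ and $P_1$.
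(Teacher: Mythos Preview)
Your overall strategy is close to the paper's, but the two arguments diverge precisely at the point you call ``class~(b)'', and your handling of that case has a genuine gap.

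The paper does \emph{not} try to interpolate $\Delta_0$ to $\Delta_1$ through normal sections. Instead it drags $\Delta_0$ along the homotopy $(v_t)$ to obtain a family $(\Sigma_t)$ with $\Sigma_0=\Delta_0$; since $\Sigma_t$ is always a normal section for $v_t$, the only events along the way are the stream catastrophes of Proposition~\ref{catastrophes:prop}, i.e.\ your class~(a), and one lands on some $\Sigma_1$ which is a normal section for $v_1$ but typically different from $\Delta_1$. The remaining task---comparing two normal sections $\Sigma_1$ and $\Delta_1$ for the \emph{same} flow $v_1$---is handled by a specific trick: build a chain $\Sigma_1,\Theta,\Xi,\Delta_1$ of pairwise disjoint normal sections, and for each disjoint pair $X,Y$ join them by a thin strip to a single normal section $Z\supset X\cup Y$, then expand $X$ to $Z$ and contract $Z$ to $Y$. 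These are smooth isotopies of a single disc through normal sections, so again only class~(a) events occur and one never needs to analyze disc--flow tangencies or collisions of $\partial\Delta_t$ with concave lines.

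Your reduction of class~(b) to class~(a) via the ball-sliding proposition of Section~3.1 does not go through as written. That proposition produces a homotopy of $v$ on $M$ relative to $(\partial M)\cup B_1\cup B_2$ after which the two complements become \emph{diffeomorphic}; it does not furnish a stream-homotopy of $w_t$ on a fixed manifold $N_t$, because the manifold $N_t=\overline{M\setminus B_t}$ itself changes with $t$, its boundary configuration is not fixed, and nothing ensures that the intermediate fields on the complement have all orbits homeomorphic to arcs. So you cannot invoke Proposition~\ref{catastrophes:prop} there. The missing idea is the disjoint-sections/strip argument above, which is what actually turns ``independence of the choice of normal section'' into a sequence of genuine stream-homotopies and hence of sliding moves.
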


\begin{proof}
The first step is to follow the first normal section
along the homotopy, thus getting a smooth deformation
$(\Sigma_t)_{t\in[0,1]}$ with $\Sigma_0=\Delta_0$ and $\Sigma_t$ a normal section for
$v_t$ for all $t\in[0,1]$. Assuming the deformation is generic, along the deformation
$(\Sigma_t)_{t\in[0,1]}$ and simultaneous homotopy $(v_t)_{t\in[0,1]}$ we will
only have the same catastrophes as in Proposition~\ref{catastrophes:prop},
so $P_0$ and the stream-spine $\widetilde{P_1}$ defined by $\Sigma_1$ and $v_1$
are related by sliding moves.
The next step, as in~\cite{LNM}, consists in constructing normal sections $\Theta$ and $\Xi$ for
$(M,v_1)$ such that $\Sigma_1\cap\Theta=\Theta\cap\Xi=\Xi\cap\Delta_1=\emptyset$, which is easily done.
The conclusion now comes from the fact that given two disjoint normal sections $X$ and $Y$ of
$(M,v)$ one can join them by a small strip constructing a normal section $Z$ that contains $X\cup Y$, and then
one can view the transformation of $X$ into $Y$ as first the smooth expansion of $X$ to $Z$ and then the contraction of $Z$ to $Y$.
At the level of the associated stream-spines this transition again consists of the elementary
sliding moves of Figg.~\ref{slide/20:fig} to~\ref{slide/spike:fig}.
\end{proof}

\begin{rem}\label{glue:rem}
\emph{Suppose for $j=1,2$ that $M_j$ is an oriented $3$-manifold endowed with a generic flow $v_j$, and that $\Sigma_j$ is a boundary
component of $M_j$. Suppose moreover that one is given a diffeomorphism $\Sigma_1\to\Sigma_2$ mapping
the in-region of $\Sigma_1$ to the out-region of $\Sigma_2$ and conversely,
the concave line on $\Sigma_1$ to the convex line on $\Sigma_2$ and conversely,
the concave-to-convex transition points of $\Sigma_1$ to the convex-to-concave transition points of $\Sigma_2$
and conversely.
Then one can glue $M_1$ to $M_2$ along this map, getting on the resulting manifold $M$ a generic flow $v$ well-defined up to homotopy.
This implies that there exists a natural cobordism theory in the set $\calF$ of $3$-manifolds
endowed with a generic flow, and one could hope to use the combinatorial encoding
$\varphi:\calS\to\calF$ described in this paper as a technical tool to develop a TQFT~\cite{Turaev}
for such manifolds.}\end{rem}

\vspace{.5cm}

\noindent
Dipartimento di Matematica\\
Universit\`a di Pisa\\
Via Filippo Buonarroti, 1C\\
56127 PISA -- Italy\\
petronio@dm.unipi.it

\end{document}